\newtheorem{thm}{Theorem}[section]
\newtheorem{lem}[thm]{Lemma}
\newtheorem{prop}[thm]{Proposition}
\newcommand{\pd}{{\partial}}
\newcommand{\bpd}{{\bar\partial}}
\newcommand{\R}{Riemannian }
\newcommand{\Rm}{Riemannian manifold}
\newcommand{\K}{K\"ahler }
\newcommand{\m}{manifold}
\newcommand{\hk}{hyperk\"ahler }
\newcommand{\ma}{Monge-Amp\`ere }
\begin{document}
\baselineskip=16pt
\title{\ma equation, \hk structure and adapted complex structure} 
\author{ Su-Jen Kan\thanks { {\it 2020 Mathematics Subject Classification.} 32C09,  32Q15,  53C26}}

\date{  April. 26, 2024. }

\maketitle

\begin{abstract}

In the tangent bundle of $(M,g)$, it is well-known that the \ma equation $(\pd\bar\pd \sqrt\rho)^n=0$ 
has the asymptotic expansion
$
\rho(x+iy)=\sum_{ij} g_{ij} (x) y_{i} y_{j} + O(y^4)$
near $M$.
Those 4th order terms are made explicit in this article:
$$\rho(x+iy)=\sum_{i}y_{i}^2-\frac 13\sum_{pqij} R_{i p j q}(0)x_p x_q y_{i}y_{j}+O(5).$$
At $M$, sectional curvatures  of the \K metric  $2i\pd\bar\pd\rho$  can be computed. This has enabled us to   find a family of  \K\m s whose tangent bundles have admitted complete \hk structures whereas the  adapted complex structure can only be  partially defined on the tangent bundles.

 In these cases, the study of the adapted complex structure is equivalent to the study of some gauge transformations on the baby Nahm's equation $\dot T_1+[T_0,T_1]=0.$
\end{abstract}
 \vskip 0.2truein 
\setcounter{equation}{0}
\setcounter{section}{0}
\section{Introduction} 
\setcounter{equation}{0}
For an open $\frak D_{M}\subset TM$ containing $M$, the  adapted complex structure  is the unique  complex structure  in $\frak D_{M}$
turning each leaf $\gamma^{\Bbb C}\cap \frak D_{M}$
into  a holomorphic curve. It is equivalent to the solving of the \ma equation $(\pd\bar\pd\sqrt\rho)^n=0$ on $\frak D_{M}-M$ subjected to the initial condition that the restriction of the  \K metric $2i\pd\bar\pd\rho$ to $M$ has coincided with the original \R metric $g$ on $M$. Near $M$, the asymptotic expansion
$
\rho(x+iy)=\sum_{ij} g_{ij} (x) y_{i} y_{j} + O(y^4)$
is well-known  for quite a long while. 
Any attempt to reach the 4th order terms will encounter tedious computations on complicated curvature  tensors. As far as we know, there is no further development on this yet.
In Theorem \ref{I3}, we complete the  expansion up to 4th order terms
$$\rho(x+iy)=\sum_{i}y_{i}^2-\frac 13\sum_{pqij} R_{i p j q}(0)x_p x_q y_{i}y_{j}+O(5).$$ 

Despite  the vanishing of the holomorphic sectional curvature along any Riemann foliation $\gamma^{\Bbb C},$ no other sectional curvature is known.

With these 4th order terms at hand, we are able  to compute any sectional curvature of the 
\K metric $2i\pd\bar\pd\rho$ at $M$. It turns out that if 
$(M,g)$ is positively curved   some sectional curvature of $2i\pd\bar\pd\rho$ at $M$ must be negative. 

If a non-flat $(M, g)$ has non-negative sectional curvature, an immediate application of the above result is that 
  the adapted complex structure cannot be globally defined on
 the whole tangent bundle of the \K\m\ $(\frak D_{M}, 2i\pd\bar\pd\rho).$

A \hk structure in a  differentiable
manifold $M$  consists of two anti-commuting complex structures $I,J$ and a \R metric $\kappa$, which is K\"ahlerian with respect to all $I, J$ and $IJ$.

It is natural to expect the tangent bundle of a \K\m\ might be able to produce an extra complex structure anti-commuting with the original one to produce such a \hk structure. A local existence was proved in \cite {Fe} and in \cite {Kal}. 
In other words,
 near the zero section of the tangent bundle,
   the adapted complex structure  and  \hk structures have existed simultaneously provided the underlying \K\m\ is either compact or homogeneous.
For a compact  Hermitian symmetric  $M$,
 both the \hk   and the adapted complex structures  have existed on the whole $TM$. It is thus interesting to see examples where one of the structure has existed but not the other one.
 
 We show that the  \hk structure on $TG^{\Bbb C}/H^{\Bbb C}$ constructed in \cite{Da-Sw}  can be viewed as a \hk structure on the tangent bundle of    $(T(G/H, g_n),2i\pd\bar\pd\rho),$ the \K\m \ obtained from doing the adapted complexification on the  tangent bundle of the \R\m\ $(G/H, g_n)$. However, the adapted complex structure cannot exist on the whole tangent bundle of $(T(G/H, g_n),2i\pd\bar\pd\rho)$.

 We briefly review the adapted complexification   and prove the existence of it on $T(G/H, g_n)$
 in the coming section.  In \S 3, the 4th order terms of the \ma equation $(\pd\bpd\sqrt\rho)^n=0$ are computed and are used  to decide the sign of  sectional curvatures at the zero section in \S4.  The last section is devoted to those \hk metrics on $TG^{\Bbb C}/H^{\Bbb C}$ obtained from solving  Nahm's equations. It turns out the adapted complexification on $T(G/H, g_n)$ is reduced to the solving of the baby Nahm's equation $\dot T_1+[T_0,T_1]=0.$
 
\vskip 0.2truein

\section{Adapted complex structure $TG/H$} 

\setcounter{equation}{0}

Let  $\gamma:\Bbb R\to M$ be a geodesic in a complete real-analytic \Rm\ $(M,g)$ parametrized by the arclength. We denote its complexification as 
the immersion 
 \begin{equation}
\gamma^{\Bbb C}:\Bbb C\to TM;\ \ 
 \gamma^{\Bbb C}( t+is) = (\gamma (t), s\dot \gamma(t))\in T_{\gamma (t)}M.
 \end{equation}

The adapted complex structure,  first established in [Gu-Sten] and in [Le-Sz],
 is the unique complex structure $J$ in a   domain $\frak D_{M}\subset TM$ containing $M$ such that,  for any geodesic $\gamma$,
the immersion  $\gamma^{\Bbb C}:\  \Bbb C\cap {\gamma^{\Bbb C}}^{-1}( \frak D_{M})  \to (\frak D_{M},  J)$ 
is  holomorphic. In other words, it is the unique  complex structure  in $\frak D_{M}$
turning each leaf of the  foliation $\cup_{\gamma} (\gamma^{\Bbb C}\cap \frak D_{M})$ into  a holomorphic curve.

For generic $(M, g)$, the domain $\frak D_{M}$  is properly contained in  $TM$. One necessary condition for the adapted complex structure to exist on the whole tangent bundle 
is that $(M, g)$ is
non-negatively curved. Since any bi-invariant metric of a compact Lie group $G$ has positive sectional curvature, it is natural to invest the definition on the tangent bundle of a compact Lie group.
The following construction has borrowed 
 ideas from [Sz1,2].

The tangent bundle $TG$ of a Lie group is identified with $G\times\frak g$ through the mapping $(a,v)\overset \phi\longrightarrow (a,(L_{a^{-1}})_* v)$ where $L_b$ is the left translation by $b$. For compact $G$, any bi-invariant metric $\langle , \rangle$ can be picked up to make the following $\varphi$ \begin{equation} \label{L1200}
\varphi: G\times\frak g\longrightarrow   G^{\Bbb C};  \  (g,\eta)\to g \exp i\eta
\end{equation}
a diffeomorphism. That is, the tangent  bundle $TG$ can be identified with $G^{\Bbb C}$ through $\varphi\cdot\phi$.

 A geodesic in $(G, \langle , \rangle)$ is of the form $\gamma(t)=a\exp tX$ for some $a\in G, X\in \frak g$ and $\phi (\gamma(t), s\cdot\gamma(t))=(\gamma(t), sX)\in G\times\frak g$.

 The mapping
$\Bbb C  \overset \eta\longrightarrow TG \overset \phi\longrightarrow  G\times\frak g\overset \varphi\longrightarrow   G^{\Bbb C}$ defined by 

\begin{equation}\begin{aligned}\label{3}
\varphi\cdot\phi\cdot\eta (t+is)&=\varphi\cdot\phi (\gamma(t), s\dot\gamma(t))\\
&=\varphi (\gamma(t), s X )\\
  &=a\exp tX\exp isX\\
 &=a\exp (t+is)X
 \end{aligned}
 \end{equation}  
 is holomorphic. 
 Thus, $G^{\Bbb C}$ is the adapted complexification on the tangent bundle of $(G, \langle , \rangle).$
The  adapted complex structure is defined on the whole  $T(G,g)$ for any bi-invariant metric $g$ in a compact Lie group $G$
 and the resulting complex \m\ is $G^{\Bbb C}$.

 The next step is to extend the above  from  compact Lie groups to  compact homogeneous spaces.
Let $H$ be a closed subgroup of a compact Lie group $G$, a bi-invariant metric $g$  will decompose 
$\frak g=\frak h\oplus\frak m$ where $\frak h$ is the Lie algebra of $H$ and $\frak m=T_{[e]}G/H$. The homogeneous space $G/H$  has inherited the induced metric $g_n$, a normal metric,  and $TG/H$ is diffeomorphic to 
$G\times_H\frak m$ through  the mapping $(a,v)\overset \phi\longrightarrow (a,(L_{a^{-1}})_* v)$ for any $a\in G/H$ and $v\in T_{[a]}G/H$ where the left translation $L_a: G/H\to G/H; xH\to axH$ and ${L_a}_*: T_{[x]}G/H\to T_{[ax]}G/H.$

Any geodesic of $(G/H, g_n)$ is of the form $\gamma(t)=a\exp tY, a\in G, Y\in \frak m$, p.192 Cor. 2.5 \cite {Ko-No}. $(\gamma(t),s\dot \gamma(t))\in T_{\gamma(t)}G/H$. 
We compute 
\begin{equation}\begin{aligned}\label{L111}
(L_{{\gamma(t)}^{-1}})_* (s\dot \gamma(t))&=s(L_{{\gamma(t)}^{-1}})_*\frac {d}{d\sigma}|_{\sigma=t}\gamma(\sigma)\\
&=s\frac {d}{d\sigma}|_{\sigma=t}{\gamma(t)}^{-1}\gamma(\sigma)\\
&=s\frac {d}{d\sigma}|_{\sigma=t}\exp (\sigma-t)Y\\
&=sY.
\end{aligned}
 \end{equation}  

 As in the $G^{\Bbb C}$ case,
 to see  the adapted complex structure  on $T(G/H, g_n)$,
  a  crucial step is to set a diffeomorphism between $G\times_H\frak m$ and 
$G^{\Bbb C}/H^{\Bbb C}$.
\

For this to work, we need a  decomposition theorem for $G^{\Bbb C}$. For a compact semi-simple  $G$ and a closed subgroup $H<G$, the Lie algebra $\frak g$ can be decomposed as   $\frak g=\frak h\oplus m$ with respect to the Killing metric. Mostow's decomposition theorem, \cite{Mo}, has implied 
$ G^{\Bbb C}=G\cdot\exp {im} \cdot H^{\Bbb C}.$
 One  crucial step in the proof is that the non-degenerate Killing form $B$ of $\frak g^{\Bbb C}$  is $Aut(\frak g^{\Bbb C})-$invariant which has implied $B(\frak g, i\frak g)=0$ to arrange a good basis to achieve the decomposition.
 
 If $G$ is compact but not semi-simple,  $H<G$ is a closed subgroup,  a bi-invariant metric on $G$ can decompose  $\frak g=\frak h\oplus m$ as well. However, it is not clear how to build up a non-degenerate scalar product $\langle\ ,\  \rangle$ in $\frak g^{\Bbb C}$ to play the role of the above Killing form $B$ so that $\langle \frak g , i\frak g \rangle=0$ to extend the decomposition to a generic compact Lie group $G$.
 
 Nevertheless, as a byproduct of their work about Lie group actions on complex spaces,
 Heinzner-Schwarz, p.212  Remark 9.4 \cite {He-Sch}, have confirmed the     decomposition 
 \begin{equation}\label{1}
 G^{\Bbb C}=G\cdot\exp {im} \cdot H^{\Bbb C}
  \end{equation}
 has also worked for generic compact Lie group $G$. 
It follows,  the mapping  
\begin{equation}\label{2}
 \psi: G\times_{H}\frak m\to G^{\Bbb C}/H^{\Bbb C};\  \   (g,v)\to g\exp (iv) H^{\Bbb C},
   \end{equation}  
is a $G$-equivariant diffeomorphism.
 The mapping
$\Bbb C  \overset \eta\longrightarrow TG/H \overset \phi\longrightarrow  G\times_H\frak m\overset \psi\longrightarrow   G^{\Bbb C}/H^{\Bbb C}$ defined by 

\begin{equation}\begin{aligned}\label{L113}
\psi\cdot\phi\cdot\eta (t+is)&=\psi\cdot\phi (\gamma(t), s\dot\gamma(t))\\
&=\psi (\gamma(t), sY )\\
  &=a\exp sY\exp itY\\
 &=a\exp (s+it)Y
 \end{aligned}
 \end{equation}  
 is holomorphic. Thus, $G^{\Bbb C}/H^{\Bbb C}$ is the adapted complexification on the tangent bundle of $(G/H, g_n).$

We summarize the above as a proposition.
\begin{prop}\label{L8}
Let $G$ be a compact Lie group, $H<G$ be a closed subgroup and $g_n$ be a normal metric on $G/H$. The diffeomorphism \eqref{2} has  pulled back the complex structure of $G^{\Bbb C}/H^{\Bbb C}$ to the 
adapted complex structure  on the whole $T(G/H, g_n)$.
\end{prop}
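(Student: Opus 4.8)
The plan is to invoke the uniqueness characterization of the adapted complex structure recalled in \S2: it is the \emph{unique} complex structure on a domain containing the zero section for which every complexified geodesic $\gamma^{\Bbb C}$ is a holomorphic curve. Accordingly, it suffices to transport the complex structure of $G^{\Bbb C}/H^{\Bbb C}$ to $T(G/H,g_n)$ along $\psi\cdot\phi$ and verify that the transported structure has this holomorphicity property on \emph{all} of $TG/H$. I would first observe that, since $\phi:TG/H\to G\times_H\frak m$ and $\psi:G\times_H\frak m\to G^{\Bbb C}/H^{\Bbb C}$ in \eqref{2} are diffeomorphisms (the latter by the Heinzner--Schwarz decomposition \eqref{1}), the pullback $\tilde J:=(\psi\cdot\phi)^*J_{G^{\Bbb C}/H^{\Bbb C}}$ is a genuine, globally defined complex structure on the entire tangent bundle. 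Hence, once $\tilde J$ is identified with the adapted complex structure wherever the latter a priori lives, the claim of global existence on the whole bundle follows automatically.

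Next I would reduce the holomorphicity condition to the geodesics. By Cor.\ 2.5 of \cite{Ko-No} together with $G$-homogeneity, every geodesic of the normal metric $g_n$ has the form $\gamma(t)=a\exp tY$ with $a\in G$ and $Y\in\frak m$, and \eqref{L111} records $(L_{\gamma(t)^{-1}})_*(s\dot\gamma(t))=sY$. Feeding this into $\psi\cdot\phi$ is precisely the computation \eqref{L113}, whose output is $a\exp((s+it)Y)H^{\Bbb C}$, a reparametrized complex one-parameter subgroup through $a$, exactly as in the group case \eqref{3}.

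The key point is that such a curve is holomorphic in $G^{\Bbb C}/H^{\Bbb C}$: the complexified one-parameter subgroup $z\mapsto\exp(zY)$ is holomorphic in $G^{\Bbb C}$, left translation by $a\in G\subset G^{\Bbb C}$ is a biholomorphism, and the projection $G^{\Bbb C}\to G^{\Bbb C}/H^{\Bbb C}$ is holomorphic; composing these keeps the image holomorphic. Thus $\gamma^{\Bbb C}=\eta$ is $\tilde J$-holomorphic for every geodesic. Because the full complex line $z\in\Bbb C$ is carried biholomorphically onto its image leaf, this holds on all of $TG/H$, not merely near the zero section. By uniqueness of the adapted complex structure, $\tilde J$ must coincide with it on the domain where the adapted structure is defined; since $\tilde J$ is defined on the entire bundle, the adapted complex structure extends to all of $T(G/H,g_n)$ and equals $(\psi\cdot\phi)^*J_{G^{\Bbb C}/H^{\Bbb C}}$, which is the assertion.

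The step to watch is the passage from local to global. One must know that the \emph{entire} complexified leaf $z\mapsto a\exp(zY)H^{\Bbb C}$ remains inside $G^{\Bbb C}/H^{\Bbb C}$ and stays an embedded holomorphic curve for all $z\in\Bbb C$ and all $Y\in\frak m$; this is exactly what the surjectivity and injectivity in the decomposition \eqref{1} guarantee, so that $\psi$ is a global diffeomorphism rather than a local one. Once that is in place, the remaining orientation bookkeeping---matching the standard parameter $t+is$ on $\Bbb C$ with the holomorphic parameter of $\exp(zY)$, exactly as in \eqref{3}---is routine, and the metric initial condition is already encoded in the arclength normalization defining $\gamma^{\Bbb C}(t+is)=(\gamma(t),s\dot\gamma(t))$.
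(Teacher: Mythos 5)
Your proposal is correct and follows essentially the same route as the paper: identify $TG/H$ with $G\times_H\frak m$, use the Heinzner--Schwarz decomposition \eqref{1} to make $\psi$ a global diffeomorphism, compute via \eqref{L111}--\eqref{L113} that each complexified geodesic maps to the holomorphic curve $z\mapsto a\exp(zY)H^{\Bbb C}$, and invoke the uniqueness characterization of the adapted complex structure. The only difference is that you spell out the uniqueness step explicitly, which the paper leaves implicit in its summary.
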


\vskip 0.2truein
\section{4-th order terms in $\rho$} 
\setcounter{equation}{0}
 The adapted complexification $(\frak D_{M},  J)$ is a Bruhat-Whitney type of complexification of $M$ in the sense that $M$ can be embedded as  a totally real submanifold of $(\frak D_{M},  J)$. Such kind of complexifications is unique up to biholomorphisms near $M$. That is, for two complexifications $\Omega_j$ of  $M$ and embeddings $i_j: M\to \Omega_j, j=1,2$, there exist neighborhoods $\Theta_j$ of $M$ in $\Omega_j$ and a biholomorphism $\phi:\Theta_1\to \Theta_2$ such that $i_2=\phi\cdot i_1$.

In \cite {Gu-St}, the authors have treated this problem by solving $(\pd\bar\pd \sqrt\rho)^n=0$.  In $\frak D_{M}$,    the \ma equation 
$(\pd\bar\pd \sqrt\rho)^n=0$ has a solution unique up to a scalar. The preferred initial condition is that the restriction of  the \K metric $h$, induced from the \K form $2\sqrt{-1}\pd\bar\pd\rho$, to $M$ has coincided with the original \R metric $g$ of $M$. In other words, for any vector fields $X$ and $Y$ in $M$,
\begin{equation}\label{e200}
h (X,Y)=2\sqrt{-1}\pd\bar\pd\rho(X,JY)=g(X,Y).
\end{equation}
This metric is called  the {\it canonical \K metric}  of the adapted complexification over $(M, g).$
The solution $\rho$ is strictly plurisubharmonic and \begin{equation}\begin{aligned}\label{e88}
\rho(p,v)=\frac {|v|^2}2
\end{aligned}\end{equation}
for any tangent vector $(p,v)\in \frak D_{M}, v\in T_pM$, {\it cf.} p.693 \cite{Le-Sz}.

 Near the zero section $M$, the solution is rather concrete in terms of local coordinates. Let $p\in M$ and let
 $\{x_1,\cdots,x_n\}$ be a geodesic normal coordinate  for  a neighborhood $U$ of $p$ in $ M$ such that $TU\simeq  U\times \Bbb R^n$ is trivial.
 Taking   $\epsilon>0$ small enough so that $T^{\epsilon}U\simeq  U\times \Bbb R_{\epsilon}^n$ for an open subset 
 $\Bbb R_{\epsilon}^n$ of $\Bbb R^n$ 
 is a Bruhat-Whitney complexification 
 which is biholomorphic to the adapted complexification $(T^{\epsilon}U, J)$.
 
  A holomorphic coordinates $\{z_j=x_j+\sqrt{-1}y_j: j=1,\cdots, n\}$ can thus be formed for the complex manifold $(T^{\epsilon}U, J)$ where $\{x_1,\cdots,x_n\}$ is a geodesic normal coordinates for $U$ and $\{y_1,\cdots, y_n\}$ is a coordinate system of $\Bbb R_{\epsilon}^n$ with $J(\frac {\pd}{\pd x_j})=\frac {\pd}{\pd y_j}$.

  Solving  the defining \ma equation in $T^{\epsilon}U$ subjected to the initial condition \eqref{e200},  the solution $\rho$ has the  asymptotic  expansion near $M$, Prop.3.1 in \cite {Kan},
 \begin{equation}\label{e1}
\rho(x+\sqrt{-1} y)=\sum_{ij} g_{ij} (x) y_{i} y_{j} + O(y^4)
\end{equation}
where the higher order terms have depended upon the curvature behavior of the metric $g$.  Locally, the \K potential $\rho$ has agreed with the Euclidean potential up to third order terms. Thus,
$(M,g)$ is a complete totally geodesic submanifold of  $(\frak D_{M},h)$.
Throughout this article, the notation $O(y^4)$ in \eqref{e1} means $$\lim_{|y|\to 0}\frac {\rho(x+\sqrt{-1} y)-\sum_{ij} g_{ij} (x) y_{i} y_{j}}{|y|^4}<\infty.$$

\

In a \Rm\ $(M,g)$ with the Levi-Civita connection $\triangledown$, the curvature tensor $R$ is defined, for any vector fields $X,Y,Z,W$, to be

\begin{equation}\begin{aligned}\label{e10}
R(X,Y,Z,W)&=g( \triangledown_X\triangledown_Y W- \triangledown_Y\triangledown_X W- \triangledown_{[X,Y]} W,Z)\\
:&=g( R(X,Y) W,Z).\\
\end{aligned}\end{equation}
The sectional curvature of the plan $\Pi$ spanned by the  vectors $X$ and $Y$ is defined as 
\begin{equation}\begin{aligned}
S(\Pi)=\frac {R(X,Y,X,Y)}{\|X\wedge Y\|^2}
\end{aligned}\end{equation}
which is independent of any choice of two independent  vectors in the plane $\Pi$.

In a local coordinate chart $\{x_j\}_{ j=1}^n$, the following notation is commonly adapted:
\begin{equation}\begin{aligned}\label{e11}
R_{ijkl}(p):= R(\frac {\pd}{\pd x_i},\frac {\pd}{\pd x_j},\frac {\pd}{\pd x_k},\frac {\pd}{\pd x_l})(p).
\end{aligned}\end{equation}

Furthermore, if  $\{x_j\}_{ j=1}^n$ is 
 a  geodesic normal coordinate centered at 0, the  following asymptotic expansion works for small $x$,
\begin{equation}
g_{ij}(x)=\delta_{ij}-\frac 13\sum_{p,q} R_{i p j q}(0)x_p x_q +O(x^3).
\end{equation}
Substituting this into \eqref{e1}, $\rho$ has the following asymptotic expansion:
 \begin{equation}\begin{aligned}\label{e2}
\rho(x+\sqrt{-1} y)&=\sum_{ij} g_{ij} (x) y_{i} y_{j} + O(y^4)\\
&=\sum_{i}y_{i}^2-\frac 13\sum_{ij p q} R_{i p j q}(0)x_p x_q y_{i}y_{j}+\sum_{ij}O(x^3)y_{i}y_{j}+O(y^4)\\
&=\sum_{i}y_{i}^2-\frac 13\sum_{ij p q} R_{i p j q}(0)x_p x_q y_{i}y_{j}+\sum_{i\le j\le\gamma\le\delta} \mathcal A_{ij\gamma\delta} y_{i}y_{j}y_{\gamma}y_{\delta} +O(5),
\end{aligned}\end{equation}
where $O(5)$ is the sum of polynomials of orders, in $x$ and $y$, greater than or equal to 5.
The aim of this section is to show
 those constant $\mathcal A_{ij\gamma\delta}$ have vanished. We'll need a general lemma on the inverse matrix to start with.
 \begin{lem}\label{l1}
Let $A=\left [ \delta_{ij}+a_{ij}(y)\right ]$ be a $n\times n$ matrix  where $a_{ij}(y)$ is a homogeneous polynomial of degree 2 in $y=(y_1,\cdots, y_n).$
Then $A^{-1}= \left [ b^{ij}(y)\right ]$
with $$  b^{ij}(y)=\left\{ \begin{array} {ll}
-a_{ij}(y)+O(y^4) & \mbox {if $i\ne j$};\\
1- a_{jj}(y)+O(y^4)& \mbox {if $i= j$}.
\end{array}
\right.$$ 
\end{lem}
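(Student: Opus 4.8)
The plan is to treat $A$ as a small perturbation of the identity and expand its inverse as a Neumann series. Writing $A = I + B$ with $B = [a_{ij}(y)]$ the matrix whose entries are the homogeneous degree-two polynomials $a_{ij}(y)$, I first observe that since each $a_{ij}$ is continuous with $a_{ij}(0)=0$, there is a neighborhood of the origin on which the operator norm satisfies $\|B\| < 1$; on this neighborhood $A$ is invertible and
$$A^{-1} = (I+B)^{-1} = \sum_{k=0}^{\infty} (-1)^k B^k = I - B + \sum_{k \geq 2} (-1)^k B^k.$$

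Next I would carry out the degree bookkeeping that makes the tail negligible. Each entry of $B$ is a homogeneous polynomial of degree exactly $2$ in $y$, so each entry of $B^k$ is a finite sum of products of $k$ such polynomials and is therefore $O(y^{2k})$; in particular every entry of $B^2$ is already $O(y^4)$. To see that the whole tail conforms to the paper's meaning of $O(y^4)$, I bound it entrywise using $\|B\| \leq C'|y|^2$ near the origin:
$$\Bigl| \bigl( \sum_{k \geq 2} (-1)^k B^k \bigr)_{ij} \Bigr| \leq \sum_{k \geq 2} \|B\|^k = \frac{\|B\|^2}{1 - \|B\|} \leq C |y|^4 .$$
Hence $A^{-1} = I - B + O(y^4)$ entrywise, and reading off the $(i,j)$ entry of $I - B$, namely $\delta_{ij} - a_{ij}(y)$, yields exactly the two claimed cases: $-a_{ij}(y) + O(y^4)$ off the diagonal and $1 - a_{jj}(y) + O(y^4)$ on it.

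I do not expect a genuine obstacle here; the only point that warrants care is distinguishing ``remainder of polynomial degree $\geq 4$'' from the analytic statement that the remainder is dominated by $C|y|^4$ as $|y|\to 0$, and this is precisely what the geometric-series estimate above supplies. An alternative route via the adjugate formula $A^{-1} = (\det A)^{-1}\,\mathrm{adj}(A)$ also works, since $\det A = 1 + \sum_i a_{ii}(y) + O(y^4)$ and expanding the cofactors produces the same leading behaviour, but the Neumann series keeps the degree counting most transparent and avoids the cofactor bookkeeping.
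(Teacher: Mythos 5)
Your proof is correct, but it takes a genuinely different route from the paper. The paper argues via the adjugate: it expands $\det A = 1 + \sum_j a_{jj}(y) + O(y^4)$, inverts this scalar, and then analyzes the cofactors $\det A^{ij}$ case by case (diagonal minors contribute $1 + \sum_{k\ne j} a_{kk}(y) + O(y^4)$, off-diagonal minors contribute $\pm a_{ij}(y) + O(y^4)$), assembling $b^{ij} = \triangle^{-1}(-1)^{i+j}\det A^{ij}$ at the end. You instead write $A = I + B$ and use the Neumann series $\sum_k (-1)^k B^k$, with the entrywise bound $\bigl|\bigl(\sum_{k\ge 2}(-1)^k B^k\bigr)_{ij}\bigr| \le \|B\|^2/(1-\|B\|) \le C|y|^4$ near the origin. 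Your approach has two advantages: the degree bookkeeping is automatic (each entry of $B^k$ is manifestly $O(y^{2k})$), and the geometric-series tail estimate gives an honest analytic remainder bound, which matters because the entries of $A^{-1}$ are rational functions of $y$ rather than polynomials, so $O(y^4)$ must be read in the limiting sense the paper defines. The paper's cofactor route, by contrast, is purely finite and algebraic — no convergence issues to address — and exhibits exactly which terms of the determinant expansion produce the low-order contributions, at the cost of the case analysis on minors that you correctly note can be avoided. Both arguments establish the lemma; yours is arguably cleaner, and you even flag the adjugate computation as the alternative, which is precisely what the paper does.
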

\begin{proof}
The determinant 
\begin{equation}\begin{aligned}
\triangle=\det A=1+\sum_{j=1}^n a_{jj}(y) +O(y^4),
\end{aligned}\end{equation}
then 
\begin{equation}\begin{aligned}
\triangle^{-1}=1-\sum_{j=1}^n a_{jj}(y) +O(y^4).
\end{aligned}\end{equation}
Let $A^{ij}$ denote the $(n-1)\times (n-1)$ matrix obtaining from deleting the $i$-th column and the $j$-th row from $A$. Denoting the inverse matrix $A^{-1}= \left [ b^{ij}(y)\right ]$, then
\begin{equation}\begin{aligned}\label{e22}
b^{ij}(y)=\frac 1{\triangle}(-1)^{i+j} \det A^{ij}.
\end{aligned}\end{equation}
$A^{jj}$ is an $(n-1)\times (n-1)$ with diagonals $a_{11}(y),..,a_{j-1,j-1}(y),a_{j+1,j+1}(y),
\cdots, a_{nn}(y)$ then 
\begin{equation}\begin{aligned}\label{e23}
\det A^{jj}&=1+\sum_{k\ne j} a_{kk} (y)+O(y^4);\\
b^{jj}(y)&=\frac 1{\triangle}(-1)^{j+j} \det A^{jj}=1-a_{jj}(y)+O(y^4).
\end{aligned}\end{equation}
For $i< j$, the components $a_{ii}(y)$ and $a_{jj}(y)$ were missing in the $(n-1)\times (n-1)$ matrix $A^{ij}$. In the computation of $\det A^{ij}$, the only possibility with order $\le 2$ comes from the term $(-1)^{j-i+1}a_{ij}(y)\Pi_{k\ne i,j} a_{kk}(y)$. Thus
\begin{equation}\begin{aligned}\label{e24}
\det A^{ij}&=(-1)^{j-i+1}a_{ij}(y) +O(y^4);\\
b^{ij}(y)&=\frac 1{\triangle}(-1)^{i+j} \det A^{ij}\\&=-(1-\sum_{j=1}^n a_{jj}(y))a_{ij}(y)+O(y^4)\\
&=-a_{ij}(y)+O(y^4).
\end{aligned}\end{equation}
The situation $i>j$ goes exactly the same line.
\end{proof} 
In \eqref{e2}, we have asked the permutation of the indexes in $\mathcal A$ is in a non-decreasing order. We will use the same notation for a general permutation, that is, $\mathcal A_{ij\gamma\delta}\ne 0$ only if $i\le j\le\gamma\le\delta$. Let's denote the following constants $B_{\alpha ijk}$ and $C_{\alpha\beta kl}$ as:
 \begin{equation}\begin{aligned}\label{e26} 
 B_{\alpha ijk}&:=\mathcal A_{\alpha ijk}+\mathcal A_{ i\alpha jk}+\mathcal A_{ ij\alpha k}+\mathcal A_{ ijk\alpha};\\
  C_{\alpha\beta kl}&:=B_{\beta\alpha kl}+B_{\beta k\alpha l}+B_{\beta k l\alpha}.  
\end{aligned}\end{equation}
\begin{lem}\label{l2}
Let $i\le j\le k\le l, \{\alpha,\beta,\gamma,\delta\}$ be a permutation of $\{i,j,k,l\}.$ Then
$$\sum_{\alpha\beta\gamma\delta\in \{i,j,k,l\}}(B_{\alpha\beta\gamma\delta}-\frac 12 
C_{\delta\gamma\alpha\beta})=-2\mathcal A_{ijkl}.$$
 \end{lem}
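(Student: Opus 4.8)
The plan is to turn the identity into a bookkeeping count of how often the single non-vanishing coefficient $\mathcal A_{ijkl}$ occurs on the left-hand side. Recall from \eqref{e2} that $\mathcal A_{abcd}\ne 0$ only when $a\le b\le c\le d$; hence among all rearrangements of the fixed tuple with $i\le j\le k\le l$, the unique non-decreasing one is $(i,j,k,l)$, and every coefficient $\mathcal A_{abcd}$ that appears after expansion is either $0$ or equal to $\mathcal A_{ijkl}$. Thus the whole expression is an integer multiple of $\mathcal A_{ijkl}$, and it remains only to pin down that integer. The engine of the count is the following elementary remark: for any fixed rearrangement pattern $\pi\in S_4$, the assignment $\sigma\mapsto\pi\sigma$ is a bijection of the set of distinct arrangements of $\{i,j,k,l\}$, so there is exactly one such $\sigma$ for which $\pi\sigma$ is non-decreasing; consequently $\sum_\sigma\mathcal A_{\pi\sigma}=\mathcal A_{ijkl}$, the sum being over all distinct arrangements. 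This holds uniformly whether or not some of $i,j,k,l$ coincide, which is what makes the coefficient on the right exactly $-2$ rather than a multiplicity-weighted constant.

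First I would apply this to the $B$-sum. Unwinding the definition in \eqref{e26}, $B_{\alpha\beta\gamma\delta}$ is the sum of the four coefficients obtained from $(\alpha,\beta,\gamma,\delta)$ by inserting its first entry into each of the four slots, i.e.\ four fixed rearrangement patterns applied to $(\alpha,\beta,\gamma,\delta)$. Summing over all arrangements and using the remark termwise gives $\sum B_{\alpha\beta\gamma\delta}=4\,\mathcal A_{ijkl}$.

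For the $C$-sum I would first use the bijection $(\alpha,\beta,\gamma,\delta)\mapsto(\delta,\gamma,\alpha,\beta)$ of the arrangement set to replace $\sum C_{\delta\gamma\alpha\beta}$ by $\sum C_{\alpha\beta\gamma\delta}$. Unwinding \eqref{e26} once more, $C_{\alpha\beta\gamma\delta}$ is a sum of three $B$'s, hence of $3\times 4=12$ coefficients $\mathcal A$ at twelve fixed rearrangement patterns; applying the remark to each of the twelve terms yields $\sum C_{\delta\gamma\alpha\beta}=\sum C_{\alpha\beta\gamma\delta}=12\,\mathcal A_{ijkl}$. Combining the two counts, $\sum\bigl(B_{\alpha\beta\gamma\delta}-\tfrac12 C_{\delta\gamma\alpha\beta}\bigr)=\bigl(4-\tfrac12\cdot 12\bigr)\mathcal A_{ijkl}=-2\,\mathcal A_{ijkl}$, as claimed. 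There is no serious obstacle here; the only place to be careful is the nested bookkeeping defining $C$ from $B$ from $\mathcal A$, namely confirming that $C$ really unpacks into twelve $\mathcal A$-terms and that the replacement $C_{\delta\gamma\alpha\beta}\to C_{\alpha\beta\gamma\delta}$ under the coordinate involution is legitimate before the termwise count is applied.
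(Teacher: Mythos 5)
Your proof is correct and takes essentially the same route as the paper: the paper observes that $\sum C_{\delta\gamma\alpha\beta}=3\sum B_{\alpha\beta\gamma\delta}$ and $\sum B_{\alpha\beta\gamma\delta}=4\mathcal A_{ijkl}$, which is exactly your count ($4$ for the $B$-sum and $12=3\cdot 4$ for the $C$-sum), giving $\left(4-\tfrac12\cdot 12\right)\mathcal A_{ijkl}=-2\mathcal A_{ijkl}$. Your explicit bijection remark (a fixed rearrangement pattern permutes the set of distinct arrangements, so exactly one image is non-decreasing) is simply a more careful formulation of the paper's ``runs through \dots three/four times'' step, with the added virtue of handling repeated indices cleanly.
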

 \begin{proof}
 As $\alpha\beta\gamma\delta$ run through $i,j,k,l$, $\sum _{\delta\gamma\alpha\beta}C_{\delta\gamma\alpha\beta}$ will run through all $B_{\alpha\beta\gamma\delta}$ three times. Similarly, $\sum _{\delta\gamma\alpha\beta}B_{\alpha\beta\gamma\delta}$ will run through all $\mathcal A_{\alpha\beta\gamma\delta}$ four times.
 However, $\mathcal A_{\alpha\beta\gamma\delta}\ne 0$ only if $\alpha\le\beta\le\gamma\le\delta.$
 \begin{equation}\begin{aligned}
 -\frac 12 \sum_{\alpha\beta\gamma\delta}
 C_{\delta\gamma\alpha\beta}+
 \sum_{\alpha\beta\gamma\delta}B_{\alpha\beta\gamma\delta}
 &=-\frac 32\sum_{\alpha\beta\gamma\delta}B_{\alpha\beta\gamma\delta}+\sum_{\alpha\beta\gamma\delta}B_{\alpha\beta\gamma\delta}\\
 &=\frac{-1}2\sum_{\alpha\beta\gamma\delta}B_{\alpha\beta\gamma\delta}
 =-2\mathcal A_{ijkl}.
\end{aligned}\end{equation}

 \end{proof}

 The \ma equation 
$(\pd\bar\pd \sqrt\rho)^n=0$ is equivalent to
\begin{equation}\begin{aligned}\label{e19} 
 \sum_{\alpha} \rho^{\alpha} \rho_{\alpha}=2\rho
 \end{aligned}\end{equation}
where 
\begin{equation}\begin{aligned}
 \rho^{\alpha}=\sum_{\beta}  \rho^{\alpha\bar \beta}\rho_{\bar\beta}
 \end{aligned}\end{equation}
and $(\rho^{\alpha\bar\beta})$ is the inverse matrix to $(\rho_{\alpha\bar\beta})$ so that $\sum_{\beta}\rho^{\alpha\bar\beta}\rho_{\gamma\bar\beta}=\delta_{\alpha\gamma}.$ 

 We  make a computation on $\rho_{\alpha}=\frac 12 (\frac {\pd\rho}{\pd x_{\alpha}}-\sqrt {-1}\frac {\pd\rho}{\pd y_{\alpha}})$. The expression of $\rho$ in  \eqref{e2} has ensured that $\frac {\pd\rho}{\pd x_{\alpha}}(\sqrt{-1}y)$ lies in  $O(y^4)$ terms. Then
 \begin{equation}\begin{aligned}\label{e29} 
\rho_{\alpha}(\sqrt{-1}y)=\frac {\pd\rho}{\pd z_{\alpha}}(\sqrt{-1}y)
&=-\sqrt{-1}y_{\alpha}-\frac {\sqrt{-1}}{2}\sum_{jkl}B_{\alpha jkl}y_j y_ky_l +O(y^4);\\
\rho_{\bar\alpha}(\sqrt{-1}y)=\frac {\pd\rho}{\pd \bar z_{\alpha}}(\sqrt{-1}y)
&=\sqrt{-1}y_{\alpha}+\frac {\sqrt{-1}}{2}\sum_{jkl}B_{\alpha jkl}y_jy_ky_l +O(y^4). 
   \end{aligned}\end{equation}

To find the inverse matrix $(\rho^{\alpha\bar\beta})$,
we take derivatives to $\rho$ and then take the value at the point $\sqrt{-1}y$,
\begin{equation}\begin{aligned}\label{e20} 
 \frac{\pd^2\rho(\sqrt{-1}y) }{\pd z_{\alpha}\pd z_{\bar \beta}}
&=\frac 1{4}\left ((\frac {\pd^2\rho}{\pd x_{\alpha} \pd x_\beta}+\frac {\pd^2\rho}{\pd y_{\alpha}\pd y_\beta})-\sqrt {-1}(\frac {\pd^2\rho}{\pd x_{\alpha} \pd y_\beta}-\frac {\pd^2\rho}{\pd x_\beta\pd y_{\alpha}})\right )(\sqrt{-1}y).\\
 \end{aligned}\end{equation}
 The third and the fourth terms are in $O(y^3)$, so it left to check the first and the second terms.
 \begin{equation}\begin{aligned}\label{e25} 
 \frac {\pd^2\rho}{\pd x_{\alpha} \pd x_\beta}(\sqrt{-1}y)&=\frac{-1}3\sum_{ij}(R_{i\beta j\alpha}(0)+R_{i\alpha j\beta}(0))y_i y_j;\\
 \frac {\pd^2\rho}{\pd y_{\alpha} \pd y_\beta}(\sqrt{-1}y)&=2\delta_{\alpha\beta}+\frac {\pd}{\pd y_{\alpha}}\sum_{l\gamma\delta}B_{\beta l\gamma\delta} y_ly_{\gamma}y_{\delta}\\
 &=2\delta_{\alpha\beta}+\sum_{k,l} C_{\alpha \beta kl}y_k y_l.
 \end{aligned}\end{equation}
 Plugging \eqref{e25} to \eqref{e20}, we have
 \begin{equation}\begin{aligned}\label{e27} 
 2\rho_{\alpha\bar\beta}(\sqrt{-1}y)=\delta_{\alpha\beta}+ D_{\alpha\beta}(y)+O(y^3)
 \end{aligned}\end{equation}
 where 
 \begin{equation}\begin{aligned}\label{e28} 
 D_{\alpha\beta}(y):=\sum_{i, j} \left (\frac 12 C_{\alpha\beta ij}-\frac 16(R_{i\beta j\alpha}(0) +R_{i\alpha j\beta}(0))\right )y_i y_j
 \end{aligned}\end{equation}
 is a homogeneous polynomial of degree 2 in $y$ and the $O(y^3)$ term is the sum of polynomials   in $y$ of degree $\ge 3$.
To find the inverse matrix up to order 2, we may throw away those $O(y)$ terms away. So, the matrix $\left [ 2\rho_{\alpha\bar\beta}\right ]$ has fit the form in Lemma \ref {l1}. Denoting the inverse matrix as $\left [ 2\rho_{\alpha\bar\beta}(\sqrt{-1}y)\right ]^{-1}:=\left [ G^{\alpha\bar\beta}(y)\right ].$ By Lemma \ref{l1},
$$  G^{\alpha\bar\beta}(y)=\left\{ \begin{array} {ll}
-D_{\alpha\beta}(y)+O(y^4) & \mbox {if $\alpha\ne \beta$};\\
1- D_{\alpha\beta}(y)+O(y^4)& \mbox {if $\alpha = \beta$}.
\end{array}
\right.$$ 
Then 
\begin{equation}\begin{aligned}\label{e30} 
 \rho^{\alpha\bar\beta}(\sqrt{-1}y)=2\delta_{\alpha\beta}-2D_{\alpha\beta}(y)+O(y^4). \end{aligned}\end{equation}

\begin{lem}\label{l5}
$$\mathcal A_{ijkl}=\frac 1{18}\sum_{\alpha\beta\gamma\delta \in \{i,j,k,l\}}(R_{\alpha\gamma\beta\delta}(0)+R_{\alpha\delta\beta\gamma}(0)),\ i\le\j\le k\le l.$$
\end{lem}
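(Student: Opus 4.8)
The plan is to extract the unknown quartic coefficients $\mathcal A_{ijkl}$ directly from the \ma equation written in the form \eqref{e19}, $\sum_{\alpha}\rho^{\alpha}\rho_{\alpha}=2\rho$ with $\rho^{\alpha}=\sum_{\beta}\rho^{\alpha\bar\beta}\rho_{\bar\beta}$, evaluated along the purely imaginary fiber, i.e.\ at the point $\sqrt{-1}y$ where $x=0$. The point of this evaluation is that the curvature term $-\tfrac13\sum R_{ipjq}(0)x_px_q y_iy_j$ in \eqref{e2} carries the factor $x_px_q$ and therefore vanishes at $x=0$, so that the only surviving quartic part of $\rho$ along this fiber is exactly $\sum_{i\le j\le k\le l}\mathcal A_{ijkl}y_iy_jy_ky_l$. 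Matching the degree-$4$ homogeneous parts of the two sides of \eqref{e19} will then determine the $\mathcal A_{ijkl}$.

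First I would substitute the expansions already prepared, namely $\rho_{\alpha},\rho_{\bar\beta}$ from \eqref{e29} and $\rho^{\alpha\bar\beta}$ from \eqref{e30}, into $\sum_{\alpha,\beta}\rho^{\alpha\bar\beta}\rho_{\bar\beta}\rho_{\alpha}$ and collect the part homogeneous of degree $4$ in $y$. Since $\rho_{\alpha}$ and $\rho_{\bar\beta}$ begin at degree $1$ with a degree-$3$ correction, while $\rho^{\alpha\bar\beta}=2\delta_{\alpha\beta}-2D_{\alpha\beta}(y)+O(y^4)$, there are exactly three sources of quartic terms: the leading $2\delta_{\alpha\beta}$ multiplied by one degree-$1$ and one degree-$3$ factor (two such terms, each collapsing to $\sum_{\alpha jkl}B_{\alpha jkl}y_\alpha y_jy_ky_l$ after the cancellation $\sqrt{-1}\cdot(-\sqrt{-1})=1$), together with the degree-$2$ part $-2D_{\alpha\beta}$ multiplied by the two leading degree-$1$ factors, giving $-2\sum_{\alpha\beta}D_{\alpha\beta}(y)y_\alpha y_\beta$. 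Recalling the definition \eqref{e28} of $D_{\alpha\beta}$, the quartic part of the left-hand side becomes
\[
2\sum_{\alpha jkl}B_{\alpha jkl}\,y_\alpha y_jy_ky_l-\sum_{\alpha\beta ij}C_{\alpha\beta ij}\,y_\alpha y_\beta y_iy_j+\tfrac13\sum_{\alpha\beta ij}\bigl(R_{i\beta j\alpha}(0)+R_{i\alpha j\beta}(0)\bigr)y_\alpha y_\beta y_iy_j,
\]
which must equal the quartic part $2\sum_{i\le j\le k\le l}\mathcal A_{ijkl}y_iy_jy_ky_l$ of $2\rho$.

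Next I would compare the coefficient of a fixed sorted monomial $y_iy_jy_ky_l$ on the two sides. On the left this amounts to summing each of the tensors $B$, $C$ and $R$ over all arrangements $\{\alpha,\beta,\gamma,\delta\}$ of the multiset $\{i,j,k,l\}$. The purely unknown part, $2\sum_{\mathrm{perm}}B_{\alpha\beta\gamma\delta}-\sum_{\mathrm{perm}}C_{\alpha\beta\gamma\delta}=2\sum_{\mathrm{perm}}\bigl(B_{\alpha\beta\gamma\delta}-\tfrac12C_{\delta\gamma\alpha\beta}\bigr)$, collapses by Lemma \ref{l2} to $-4\mathcal A_{ijkl}$; here I use that summing over all arrangements is insensitive to relabeling the index slots, so $\sum_{\mathrm{perm}}C_{\alpha\beta\gamma\delta}=\sum_{\mathrm{perm}}C_{\delta\gamma\alpha\beta}$. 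A further relabeling of the summation slots rewrites the curvature contribution $\tfrac13\sum_{\mathrm{perm}}(R_{i\beta j\alpha}+R_{i\alpha j\beta})$ as $\tfrac13\sum_{\mathrm{perm}}(R_{\alpha\gamma\beta\delta}(0)+R_{\alpha\delta\beta\gamma}(0))$. Equating with the right-hand coefficient $2\mathcal A_{ijkl}$ yields $-4\mathcal A_{ijkl}+\tfrac13\sum_{\mathrm{perm}}(R_{\alpha\gamma\beta\delta}+R_{\alpha\delta\beta\gamma})=2\mathcal A_{ijkl}$, and solving gives the asserted $\mathcal A_{ijkl}=\tfrac1{18}\sum_{\alpha\beta\gamma\delta\in\{i,j,k,l\}}(R_{\alpha\gamma\beta\delta}(0)+R_{\alpha\delta\beta\gamma}(0))$.

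The step I expect to be most delicate is the permutation bookkeeping, in particular being consistent about the fact that the arrangement sum $\sum_{\mathrm{perm}}$ on the left runs over all orderings of $\{i,j,k,l\}$ (hence carries the stabilizer multiplicity of the multiset when some indices coincide), while the right-hand side contributes only the single sorted coefficient $\mathcal A_{ijkl}$. The reconciliation of these multiplicities, including the degenerate cases where two or more of $i,j,k,l$ are equal, is exactly what Lemma \ref{l2} was built to encode, so once that lemma is invoked to absorb all of the unknown $B$ and $C$ data the only surviving input is the curvature combination from \eqref{e28}, and the conclusion follows by arithmetic.
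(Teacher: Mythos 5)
Your proposal is correct and follows essentially the same route as the paper: substitute the prepared expansions \eqref{e29}--\eqref{e30} into \eqref{e19} evaluated at $\sqrt{-1}y$, collect the quartic terms (which is exactly the paper's \eqref{e32}--\eqref{e33}), absorb the unknown $B$ and $C$ data via Lemma \ref{l2}, and solve the resulting linear relation; your bookkeeping (working with $2\mathcal A_{ijkl}=-4\mathcal A_{ijkl}+\tfrac13\sum(\cdots)$ rather than the paper's halved version $\mathcal A_{ijkl}=-2\mathcal A_{ijkl}+\tfrac16\sum(\cdots)$) yields the identical coefficient $\tfrac1{18}$.
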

\begin{proof}
The idea is to equating those  4th order terms in both sides of 
 \begin{equation}\begin{aligned}\label{e31}
   \sum_{\alpha\beta} \rho^{\alpha\bar\beta}(\sqrt{-1}y)\rho_{\bar\beta}(\sqrt{-1}y)\rho_{\alpha}(\sqrt{-1}y)=2\rho(\sqrt{-1}y).
 \end{aligned}\end{equation}
 By \eqref{e30} and \eqref{e29}, 4th order terms of the left hand side come from
  \begin{equation}\begin{aligned}\label{e32}
   \sum_{\alpha\beta} (2\delta_{\alpha\beta}-2D_{\alpha\beta}(y))(\sqrt{-1}y_{\beta}+\frac {\sqrt{-1}}{2}\sum_{jkl}B_{\beta jkl}y_j y_ky_l )(-\sqrt{-1}y_{\alpha}-\frac {\sqrt{-1}}{2}\sum_{jkl}B_{\alpha jkl}y_j y_ky_l ). 
    \end{aligned}\end{equation}
They are, by \eqref{e28},
  \begin{equation}\begin{aligned}\label{e33}
&2\sum_{\alpha jkl}B_{\alpha jkl}y_{\alpha}y_jy_ky_l-2\sum_{\alpha\beta}D_{\alpha\beta}(y)y_{\alpha}y_{\beta}\\
&=2\sum_{\alpha jkl}B_{\alpha jkl}y_{\alpha}y_jy_ky_l-2\sum_{\alpha\beta ij}\left (\frac 12 C_{\alpha\beta ij}-\frac 16(R_{i\beta j\alpha}(0) +R_{i\alpha j\beta}(0))\right )y_iy_j y_{\alpha}y_{\beta}\\
&=2\sum_{\alpha \beta\gamma\delta}\left (B_{\alpha \beta\gamma\delta}-\frac 12 C_{\delta\gamma\alpha\beta}+\frac 16(R_{\alpha\delta\beta \gamma}(0) +R_{\alpha \gamma\beta\delta}(0))\right ) y_{\alpha}y_{\beta}y_{\gamma}y_{\delta}.
    \end{aligned}\end{equation}
Equating these 4th order terms with those 4th order terms from the right hand side, we get: for $i\le\j\le k\le l$, by Lemma \ref{l2}
\begin{equation}\begin{aligned}\label{e34} 
\mathcal A_{ijkl}&=\sum_{\alpha \beta\gamma\delta\in \{i,j,k,l\}}\left (B_{\alpha \beta\gamma\delta}-\frac 12 C_{\delta\gamma\alpha\beta}+\frac 16(R_{\alpha\delta\beta \gamma}(0) +R_{\alpha \gamma\beta\delta}(0))\right )\\ 
&=-2\mathcal A_{ijkl}+\frac 16\sum_{\alpha \beta\gamma\delta}(R_{\alpha\delta\beta \gamma}(0) +R_{\alpha \gamma\beta\delta}(0)).
\end{aligned}\end{equation}
Then 
$$\mathcal A_{ijkl}=\frac 1{18}\sum_{\alpha\beta\gamma\delta \in \{i,j,k,l\}}(R_{\alpha\gamma\beta\delta}(0)+R_{\alpha\delta\beta\gamma}(0)),\ i\le j\le k\le l.$$
\end{proof}

\begin{lem}\label{l6}
$$\mathcal A_{ijkl}=0, \forall i\le j\le k\le l.$$
\end{lem}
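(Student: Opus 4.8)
The plan is to feed the formula of Lemma~\ref{l5} into the single symmetry of the curvature tensor that is immediate from its definition~\eqref{e10}, namely the antisymmetry $R(X,Y,Z,W)=-R(Y,X,Z,W)$ in the first two slots, which comes from $R(X,Y)W=-R(Y,X)W$. I expect that no further curvature identity---pair symmetry, the first Bianchi identity, and so on---will be needed; the statement should be a pure consequence of this antisymmetry once the permutation sums are organized correctly.

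First I would collapse the two families of summands in Lemma~\ref{l5} into one. As $(\alpha,\beta,\gamma,\delta)$ runs over all orderings of $\{i,j,k,l\}$, interchanging the roles of $\gamma$ and $\delta$ is only a relabeling of the summation index, so
$$\sum_{\alpha\beta\gamma\delta\in\{i,j,k,l\}}R_{\alpha\delta\beta\gamma}(0)=\sum_{\alpha\beta\gamma\delta\in\{i,j,k,l\}}R_{\alpha\gamma\beta\delta}(0),$$
and hence
$$\mathcal A_{ijkl}=\frac 19\sum_{\alpha\beta\gamma\delta\in\{i,j,k,l\}}R_{\alpha\gamma\beta\delta}(0).$$
Next I would reindex once more by setting $(a,b,c,d):=(\alpha,\gamma,\beta,\delta)$; since swapping the second and third positions is a bijection on the set of orderings, the tuple $(a,b,c,d)$ again runs over all orderings of $\{i,j,k,l\}$, so the sum equals $\sum_{(a,b,c,d)}R_{abcd}(0)$ taken over all $4!$ orderings of the four labeled values.

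Finally I would invoke antisymmetry in the first pair: the involution that interchanges the entries in the first two slots, $(a,b,c,d)\mapsto(b,a,c,d)$, is a fixed-point-free pairing of the orderings, and each pair contributes $R_{abcd}(0)+R_{bacd}(0)=0$. When two of the values $i,j,k,l$ coincide, the corresponding diagonal terms $R_{aacd}(0)$ vanish on their own by the same antisymmetry, so the argument is unaffected; in either reading the total sum is zero, giving $\mathcal A_{ijkl}=0$. Since there is no genuine computation here, the only point requiring care is the bookkeeping---verifying that both reindexings are legitimate bijections on the set of orderings and that ``summing over all permutations of the multiset'' is read consistently as a sum over the $4!$ orderings of four labeled slots. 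Beyond that I anticipate no real obstacle.
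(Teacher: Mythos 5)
Your proof is correct, and its overall strategy is parallel to the paper's: both arguments annihilate the permutation sum of Lemma \ref{l5} by pairing off orderings and cancelling with an antisymmetry of the curvature tensor. The genuine difference is \emph{which} antisymmetry does the work. The paper keeps the two sums $\sum R_{\alpha\gamma\beta\delta}$ and $\sum R_{\alpha\delta\beta\gamma}$ as they stand, fixes the first two indices, and cancels the two assignments of the remaining pair using skew-symmetry in the \emph{last} two slots, $R_{abcd}=-R_{abdc}$; with the paper's convention $R(X,Y,Z,W)=g(R(X,Y)W,Z)$ this is the skew-adjointness of $R(X,Y)$, which holds for the Levi-Civita connection but is not immediate from the displayed definition of $R$ (it needs metric compatibility). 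You instead collapse everything into a single sum $\sum R_{abcd}$ over all $4!$ orderings (your two reindexing bijections are legitimate) and cancel via skew-symmetry in the \emph{first} two slots, $R_{abcd}=-R_{bacd}$, which is indeed purely definitional since $R(X,Y)=-R(Y,X)$. So your route costs a little extra bookkeeping but buys a proof relying only on the definitional symmetry, whereas the paper's is shorter but quietly invokes a further standard curvature identity. Your treatment of repeated indices (fixed points of the involution contribute $R_{aacd}=0$, and either reading of the multiset sum gives zero) is also correct and addresses a point the paper leaves implicit.
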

\begin{proof}
When we sum up all those $R_{\alpha\gamma\beta\delta}(0)$, for example, for a fixed pair of $\alpha=i,\gamma=j$, there must contain both the pair $\beta=k,\delta=l$ and the pair $\beta=l,\delta=k$. So, for any fixed $\alpha$ and $\gamma$,
$\sum_{\beta\delta}R_{\alpha\gamma\beta\delta}(0)=0.
$
\begin{equation}\begin{aligned}
\sum_{\alpha\beta\gamma\delta \in \{i,j,k,l\}}(R_{\alpha\gamma\beta\delta}(0)+R_{\alpha\delta\beta\gamma}(0))
&=\sum_{\alpha\gamma}\sum_{\beta\delta}R_{\alpha\gamma\beta\delta}(0)+
\sum_{\alpha\delta}\sum_{\beta\gamma}R_{\alpha\delta\beta\gamma}(0)=0. 
\end{aligned}\end{equation}
\end{proof}

The following Theorem has been proved.

\begin{thm}\label{I3} 
For $x$ small in a normal coordinate in $M$ centered at 0 and $y$ the corresponding small imaginary part, $\rho$ has the following asymptotic expansion:
$$\rho(x+\sqrt{-1}y)=\sum_{i}y_{i}^2-\frac 13\sum_{pqij} R_{i p j q}(0)x_p x_q y_{i}y_{j}+O(5).$$

\end{thm}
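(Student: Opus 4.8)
The plan is to pin down the unknown fourth-order coefficients $\mathcal{A}_{ij\gamma\delta}$ appearing in the expansion \eqref{e2} and to show they all vanish, so that \eqref{e2} collapses exactly to the claimed formula. The starting point is the known second-order expansion \eqref{e1} together with the normal-coordinate expansion $g_{ij}(x)=\delta_{ij}-\frac{1}{3}\sum_{pq}R_{ipjq}(0)x_px_q+O(x^3)$; substituting the latter into the former produces the explicit $x_px_qy_iy_j$ term and isolates the residual pure-$y$ quartic $\sum\mathcal{A}_{ij\gamma\delta}y_iy_jy_\gamma y_\delta$ as the only unknown. Everything then hinges on the defining Monge-Amp\`ere equation in its scalar form \eqref{e19}, namely $\sum_\alpha\rho^\alpha\rho_\alpha=2\rho$, evaluated along the purely imaginary locus $\sqrt{-1}y$, where the structure of $\rho$ is cleanest.

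First I would compute, at the point $\sqrt{-1}y$, the first derivatives $\rho_\alpha,\rho_{\bar\alpha}$ and the complex Hessian $\rho_{\alpha\bar\beta}$ to the order required, as in \eqref{e29} and \eqref{e25}--\eqref{e28}. The key simplification is that the even-in-$y$, odd-in-$x$ shape of the quartic forces $\pd\rho/\pd x_\alpha$ to lie in $O(y^4)$ along $\sqrt{-1}y$, so $\rho_\alpha$ is controlled by the symmetrized coefficient $B_{\alpha jkl}$, while $\rho_{\alpha\bar\beta}$ yields the degree-two polynomial $D_{\alpha\beta}(y)$ assembled from the doubly symmetrized $C_{\alpha\beta ij}$ and the curvature. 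Since $[2\rho_{\alpha\bar\beta}]$ then takes the form $\delta_{\alpha\beta}+D_{\alpha\beta}(y)+O(y^3)$, the inverse-matrix Lemma \ref{l1} applies verbatim and delivers $\rho^{\alpha\bar\beta}=2\delta_{\alpha\beta}-2D_{\alpha\beta}(y)+O(y^4)$, which is \eqref{e30}.

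With these ingredients the argument becomes bookkeeping: substitute $\rho^{\alpha\bar\beta},\rho_\alpha,\rho_{\bar\beta}$ into \eqref{e19} and collect the fourth-order terms on both sides. The combinatorial identity of Lemma \ref{l2} rearranges the left-hand contribution into the self-referential relation $\mathcal{A}_{ijkl}=-2\mathcal{A}_{ijkl}+\frac{1}{6}\sum_{\alpha\beta\gamma\delta}(R_{\alpha\delta\beta\gamma}(0)+R_{\alpha\gamma\beta\delta}(0))$, i.e. the closed formula of Lemma \ref{l5}. I would then invoke the antisymmetry of the curvature tensor in its last two slots, $R_{\alpha\gamma\beta\delta}=-R_{\alpha\gamma\delta\beta}$, which is exactly the metric-compatibility symmetry read off from the definition \eqref{e10}: fixing the first two indices and summing the remaining two over both orderings gives $\sum_{\beta\delta}R_{\alpha\gamma\beta\delta}(0)=0$, so every such sum vanishes and $\mathcal{A}_{ijkl}=0$ for all $i\le j\le k\le l$, as in Lemma \ref{l6}. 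Feeding this back into \eqref{e2} yields the theorem.

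I expect the main obstacle to be the combinatorial matching in the step producing Lemma \ref{l5}. One must track how the successive symmetrizations interact ($B$ built from $\mathcal{A}$, then $C$ built from $B$) when $\alpha\beta\gamma\delta$ range over all permutations while $\mathcal{A}$ is supported only on non-decreasing index tuples, and verify the multiplicities (each $B$ hit three times, each $\mathcal{A}$ four times) so that the equation for $\mathcal{A}_{ijkl}$ closes with precisely the coefficient $-2$. The final curvature cancellation is clean once the correct index symmetry is identified, so the delicate point is getting the symmetrization constants exactly right; this is where an error would most naturally arise.
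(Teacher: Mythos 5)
Your proposal is correct and follows essentially the same route as the paper's own proof: expand $\rho$ with unknown quartic coefficients $\mathcal{A}_{ijkl}$, evaluate the scalar Monge--Amp\`ere equation $\sum_\alpha \rho^\alpha\rho_\alpha = 2\rho$ along the imaginary locus, invert the complex Hessian via Lemma \ref{l1}, close the system with the symmetrization identity of Lemma \ref{l2} to get Lemma \ref{l5}, and kill the result by the antisymmetry $R_{\alpha\gamma\beta\delta}=-R_{\alpha\gamma\delta\beta}$ as in Lemma \ref{l6}. The delicate point you flag (the multiplicities in the $B$, $C$ symmetrizations) is indeed where the work lies, and your accounting of it matches the paper's.
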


\vskip 0.2truein

\section{Estimate on some sectional curvatures} 
\setcounter{equation}{0}
The   manifold $\frak D_{M}$ is equipped with the \K metric $h$ obtained from the  \K form $\omega=-2i\pd\bar\pd\rho$ and the adapted complex structure $J$. We use $K$ to denote the curvature tensor of  this \K metric, {\it i.e.,} 
\begin{equation}\begin{aligned}\label{e12}
K(X,Y,Z,W)=h( \hat\triangledown_X\hat\triangledown_Y W- \hat\triangledown_Y\hat\triangledown_X W- \hat\triangledown_{[X,Y]} W,Z)=h (K(X,Y)W,Z)
\end{aligned}\end{equation}
where $\hat\triangledown$ is the corresponding Levi-Civita connection of the \Rm\ $(\frak D_{M},h).$
The holomorphic sectional curvature of the $J$-invariant plane $\Pi$ spanned by the  vectors $X$ and $JX$ is given as 
\begin{equation}
S(\Pi)=\frac {K(X,JX,X,JX)}{\|X\wedge JY\|^2}.
\end{equation}
From the fact that the connection is almost complex
 $K(JX,JY)=K(X,Y); K(X,Y)\cdot J=J\cdot K(X,Y)$. The sectional curvature of the plane spanned by $JX$ and $JY$ is the same as  
the sectional curvature of the plane spanned by $X$ and $Y$:
\begin{equation}\begin{aligned}
K(JX,JY,JX,JY)&=\langle K(JX,JY)JY,JX\rangle\\
&=\langle K(X,Y)JY,JX\rangle\\
&=\langle J K(X,Y)Y,JX\rangle\\
&=\langle  K(X,Y)Y,X\rangle\\
&= K(X,Y,X,Y).
\end{aligned}\end{equation}

In terms of local coordinate $z_j=x_j+\sqrt {-1} y_j$, the curvature tensors on vectors $\{\frac{\pd}{\pd z_j}, \frac{\pd}{\pd\bar z_k}: j,k=1,\cdots, n\}$
can be computed with $K_{ij\bar k l}$ defined in a natural way. For instance,
\begin{equation}\begin{aligned}
K_{i\bar j k\bar l}&:=K(\frac{\pd}{\pd z_i},\frac{\pd}{\pd \bar z_j}, \frac{\pd}{\pd z_k},\frac{\pd}{\pd \bar z_l});\\
K_{ij\bar k l}&:=K(\frac{\pd}{\pd z_i},\frac{\pd}{\pd z_j}, \frac{\pd}{\pd\bar z_k},\frac{\pd}{\pd z_l}).
\end{aligned}\end{equation}

It is well-known,  [Ko-No]  IX.5,  that  only the following types  can be different from 0:
\begin{equation}
K_{i\bar j k\bar l},\  K_{i\bar j \bar k l}, \ K_{\bar i j k\bar l},\  K_{\bar i j \bar k l},
\end{equation}
which can be interpreted by derivatives of the potential function of the \K metric. Since 
the \K form 
$\omega=2\sqrt{-1}\pd\bar\pd\rho,$

\begin{equation}\begin{aligned}\label{e3}
K_{i\bar jk\bar l}&=\frac{\pd^2\rho_{k\bar l}}{\pd z_{i}\pd z_{\bar j}}-\sum_{\nu\mu}\rho^{\nu\bar\mu}\frac{\pd\rho_{k\bar\mu}}{\pd z_{i}}\frac{\pd\rho_{\nu\bar l}}{\pd \bar z_{j}}\\
&=\frac{\pd^4\rho}{\pd z_{i}\pd z_{\bar j}\pd z_{k}\pd z_{\bar l}}-\sum_{\nu\mu}\rho^{\nu\bar\mu}\frac{\pd^3\rho}{\pd z_{i}\pd z_{ k}\pd\bar z_{\mu}}\frac{\pd^3\rho}{\pd \bar z_{j}\pd\bar z_l\pd z_{\nu}}.
\end{aligned}\end{equation}

 As
\begin{equation}
\frac{\pd}{\pd x_j}=\frac{\pd}{\pd z_j}+\frac{\pd}{\pd \bar z_j}; \  \  \frac{\pd}{\pd y_j}=\sqrt {-1} (\frac{\pd}{\pd z_j}-\frac{\pd}{\pd \bar z_j}),
\end{equation}
the sectional curvature of the plane spanned by $\frac{\pd}{\pd x_i}$ and $\frac{\pd}{\pd y_j}$ can be computed as following:

\begin{equation}\begin{aligned}\label{e6}
K(\frac{\pd}{\pd x_i},\frac{\pd}{\pd y_j}, \frac{\pd}{\pd x_i},\frac{\pd}{\pd y_j})&
=-K(\frac{\pd}{\pd z_i}+\frac{\pd}{\pd \bar z_i},\frac{\pd}{\pd z_j}-\frac{\pd}{\pd \bar z_j}, \frac{\pd}{\pd z_i}+\frac{\pd}{\pd \bar z_i},\frac{\pd}{\pd z_j}-\frac{\pd}{\pd \bar z_j})\\
&=-( K_{i\bar j i\bar j }+2K_{i\bar j j\bar i }+\overline {K_{i\bar j i\bar j }}).
\end{aligned}\end{equation}
Plugging the expression of $\rho$ in Theorem \ref{I3} it is clear to see,
when evaluated at the point $0$, the second part of \eqref {e3} has vanished since there is no third order terms in $\rho$. It is sufficient to take care of the first part.
\begin{equation}\begin{aligned}\label{e5}
\frac{\pd^4\rho }{\pd z_{i}\pd z_{\bar j}\pd z_{k}\pd z_{\bar l}}
=\frac 1{16}&\left ((\frac {\pd^2}{\pd x_k \pd x_l}+\frac {\pd^2}{\pd y_k\pd y_l})+\sqrt {-1}(\frac {\pd^2}{\pd x_k \pd y_l}-\frac {\pd^2}{\pd x_l\pd y_k})\right )\\
&\left ((\frac {\pd^2}{\pd x_i \pd x_j}+\frac {\pd^2}{\pd y_i\pd y_j})+\sqrt {-1}(\frac {\pd^2}{\pd x_i \pd y_j}-\frac {\pd^2}{\pd x_j\pd y_i})\right )\rho\\
=\frac 1{16}&\left (\frac{\pd^4\rho}{\pd x_i\pd x_j\pd x_k\pd x_l}+\frac{\pd^4\rho}{\pd x_i\pd x_j\pd y_k\pd y_l} +\frac{\pd^4\rho}{\pd x_k\pd x_l\pd y_i\pd y_j}+ \frac{\pd^4\rho}{\pd y_i\pd y_j\pd y_k\pd y_l} \right )\\
-\frac 1{16}&\left (\frac{\pd^4\rho}{\pd x_i\pd x_k\pd y_j\pd y_l} +\frac{\pd^4\rho}{\pd x_j\pd x_l\pd y_i\pd y_k}- \frac{\pd^4\rho}{\pd x_i\pd x_l\pd y_j\pd y_k}- \frac{\pd^4\rho}{\pd x_j\pd x_k\pd y_i\pd y_l}  \right )\\
+\frac {i}{16}&\left (\frac{\pd^4\rho}{\pd x_i\pd x_k\pd x_l\pd y_j} +\frac{\pd^4\rho}{\pd x_i\pd y_j\pd y_k\pd y_l}- \frac{\pd^4\rho}{\pd x_j\pd x_k\pd x_l\pd y_i}- \frac{\pd^4\rho}{\pd x_j\pd y_i\pd y_k\pd y_l}  \right )\\
+\frac {i}{16}&\left (\frac{\pd^4\rho}{\pd x_i\pd x_j\pd x_k\pd y_l} +\frac{\pd^4\rho}{\pd x_k\pd y_i\pd y_j\pd y_l}- \frac{\pd^4\rho}{\pd x_i\pd x_j\pd x_l\pd y_k}- \frac{\pd^4\rho}{\pd x_l\pd y_i\pd y_j\pd y_k}  \right ).
\end{aligned}\end{equation}
Evaluated at the point $0$, only terms of the type $\frac {\pd^4\rho}{\pd x_i \pd x_j\pd y_k \pd y_l}$ could be non-vanished.
\begin{equation}\begin{aligned}\label{e15}
\frac {\pd^4\rho (0)}{\pd x_i \pd x_j\pd y_k \pd y_l}&=\frac{-1}3(R_{kilj}+R_{kjli}+R_{likj}+R_{ljki})(0)\\
&=\frac{-2}3(R_{kilj}+R_{likj})(0).
\end{aligned}\end{equation}

\begin{equation}\begin{aligned}\label{e4}
\frac{\pd^4\rho(0)}{\pd z_{i}\pd z_{\bar j}\pd z_{k}\pd z_{\bar l}}
&=\frac 1{16}\left (\frac{\pd^4\rho(0)}{\pd x_i\pd x_j\pd y_k\pd y_l} +\frac{\pd^4\rho(0)}{\pd x_k\pd x_l\pd y_i\pd y_j} \right )\\
&\ \ -\frac 1{16}\left (\frac{\pd^4\rho(0)}{\pd x_i\pd x_k\pd y_j\pd y_l} +\frac{\pd^4\rho(0)}{\pd x_j\pd x_l\pd y_i\pd y_k}- \frac{\pd^4\rho(0)}{\pd x_i\pd x_l\pd y_j\pd y_k}- \frac{\pd^4\rho(0)}{\pd x_j\pd x_k\pd y_i\pd y_l}  \right )\\
&=\frac {-1}{24} (R_{kilj}+R_{likj}+
R_{ikjl}+R_{jkil}     )(0)\\
 &\ \  -\frac {1}{24} (-R_{jilk}-R_{lijk}   -R_{ijkl}-R_{kjil}  + R_{jikl}+ R_{kijl}      +R_{ijlk} +R_{ljik})(0)\\
 &=\frac 16(R_{ijkl}+R_{ilkj})(0).
\end{aligned}\end{equation}
Taking $k=i, l=j$ and then taking $k=j, l=i$
\begin{equation}\begin{aligned}\label{e7}
K_{i\bar j i\bar j}(0)&=\frac{\pd^4\rho(0)}{\pd z_{i}\pd z_{\bar j}\pd z_{i}\pd z_{\bar j}}\\
&=\frac {1}{6}\left (R_{ijij}+ R_{ijij}      \right )(0)\\
&=\frac {1}{3}R_{ijij}(0);
\end{aligned}\end{equation}

\begin{equation}\begin{aligned}\label{e8}
K_{i\bar j j\bar i }(0)&=\frac{\pd^4\rho(0)}{\pd z_{i}\pd z_{\bar j}\pd z_{j}\pd z_{\bar i}}\\
&=\frac {1}{6}\left (R_{ijji}+ R_{iijj}      \right )(0)\\
&=\frac {-1}{6}R_{ijij}(0)\end{aligned}\end{equation}

\begin{prop}\label{I4}

At  $p\in M$, the sectional curvature of the metric $h$ on the plane spanned by $\frac{\pd}{\pd x_i}$ and $\frac{\pd}{\pd y_j}$ is 
$\frac {-1}{3}R_{ijij}(p)$. All holomorphic sectional curvatures of the metric $h$ have vanished at $p$.
\end{prop}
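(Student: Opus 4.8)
The plan is to read off both assertions from the two curvature components $K_{i\bar j i\bar j}(0)$ and $K_{i\bar j j\bar i}(0)$ already isolated in \eqref{e7} and \eqref{e8}, once the normalizing denominator has been pinned down. Since a sectional curvature is a quotient $K(X,Y,X,Y)/\|X\wedge Y\|^2$, I would first check that at $p$ the real frame $\{\frac{\pd}{\pd x_i},\frac{\pd}{\pd y_j}\}$ is $h$-orthonormal. The initial condition \eqref{e200} gives $h(\frac{\pd}{\pd x_i},\frac{\pd}{\pd x_j})(p)=g_{ij}(0)=\delta_{ij}$ in the normal coordinate; $J$-invariance of $h$ upgrades this to $h(\frac{\pd}{\pd y_i},\frac{\pd}{\pd y_j})(p)=\delta_{ij}$; and since $2\rho_{\alpha\bar\beta}(0)=\delta_{\alpha\beta}$ is real and diagonal by \eqref{e27}, the mixed terms $h(\frac{\pd}{\pd x_i},\frac{\pd}{\pd y_j})(p)$ all vanish. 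Hence for the plane in question the denominator is $1$, and its sectional curvature equals the bare tensor value $K(\frac{\pd}{\pd x_i},\frac{\pd}{\pd y_j},\frac{\pd}{\pd x_i},\frac{\pd}{\pd y_j})(p)$.

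With the denominator settled, the first assertion is pure substitution. I would insert \eqref{e7} and \eqref{e8} into the expansion \eqref{e6}, noting that $K_{i\bar j i\bar j}(0)=\tfrac13 R_{ijij}(0)$ is real, so it coincides with its own conjugate. This yields
\[
K\Bigl(\tfrac{\pd}{\pd x_i},\tfrac{\pd}{\pd y_j},\tfrac{\pd}{\pd x_i},\tfrac{\pd}{\pd y_j}\Bigr)(p)
=-\Bigl(\tfrac13 R_{ijij}+2\bigl(-\tfrac16 R_{ijij}\bigr)+\tfrac13 R_{ijij}\Bigr)(p)=-\tfrac13 R_{ijij}(p),
\]
which is the claimed value; for $i=j$ it reduces to $0$ since $R_{iiii}=0$.

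For the holomorphic sectional curvatures I would argue that each is a contraction of $R_{ijkl}(0)$ against a symmetric weight, which the antisymmetry of $R$ annihilates. Taking $j=i$ in \eqref{e7} gives the coordinate values $K_{i\bar i i\bar i}(0)=\tfrac13 R_{iiii}(0)=0$ at once. More conceptually, for the holomorphic plane spanned by a horizontal vector $v=\sum_i c_i\,\frac{\pd}{\pd x_i}$ and $Jv=\sum_i c_i\,\frac{\pd}{\pd y_i}$ (these are precisely the tangent planes at $p$ of the geodesic leaves $\gamma^{\Bbb C}$), multilinearity together with the reduction behind \eqref{e4} writes the curvature as $\sum_{ijkl}K_{i\bar j k\bar l}(0)\,c_ic_jc_kc_l=\tfrac16\sum_{ijkl}(R_{ijkl}+R_{ilkj})(0)\,c_ic_jc_kc_l$; because $c_ic_j$ is symmetric while $R_{ijkl}=-R_{jikl}$ and $R_{ilkj}=-R_{likj}$, every term cancels in pairs, so the holomorphic sectional curvature vanishes.

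The step I expect to need the most care is the passage, at $p$, from the real fourth derivatives of $\rho$ to the curvature components: one must verify that the second (connection) term of \eqref{e3} drops out at $p$ and that only the blocks $\frac{\pd^4\rho}{\pd x_i\pd x_j\pd y_k\pd y_l}$ survive, both of which rest on Theorem \ref{I3} (no cubic term in $\rho$, explicit quartic term). Establishing $h$-orthonormality of the frame at $p$ is the other point to handle carefully, since without it the stated values would be distorted by the Gram factor. Once these are in place the two conclusions are immediate — the first by the arithmetic $\tfrac13-\tfrac13+\tfrac13=\tfrac13$ inside \eqref{e6}, the second by the antisymmetry $R_{ijkl}=-R_{jikl}$, which exhibits the vanishing of the holomorphic sectional curvature as the infinitesimal shadow of the fact that the leaves $\gamma^{\Bbb C}$ are complex curves along which $\rho$ carries no cubic correction.
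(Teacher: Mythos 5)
Your proof is correct, and its computational core is the same as the paper's: kill the connection term in \eqref{e3} at $p$ using the absence of cubic terms in Theorem \ref{I3}, then feed \eqref{e7} and \eqref{e8} into \eqref{e6} to obtain $-\frac13 R_{ijij}(p)$. You go beyond the paper in two places, both to the good. First, the paper never addresses the denominator $\|X\wedge Y\|^2$; your verification that $\{\frac{\pd}{\pd x_i},\frac{\pd}{\pd y_j}\}$ is $h$-orthonormal at $p$ (from \eqref{e200}, $J$-invariance, and the reality of $2\rho_{\alpha\bar\beta}(0)=\delta_{\alpha\beta}$ in \eqref{e27}) is exactly what is needed for the bare tensor value to equal the sectional curvature. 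Second, for the holomorphic sectional curvatures the paper only checks the coordinate planes (its proof sets $i=j$, i.e.\ $v=\frac{\pd}{\pd x_i}$), while your antisymmetry argument $\sum_{ijkl}(R_{ijkl}+R_{ilkj})(0)\,c_ic_jc_kc_l=0$ handles every plane spanned by a horizontal $v=\sum_i c_i\frac{\pd}{\pd x_i}$ and $Jv$, i.e.\ precisely the tangent planes of the leaves $\gamma^{\Bbb C}$ at $p$. This wider but still restricted scope is in fact the correct reading of the statement: for a holomorphic plane at $p$ containing no horizontal vector --- e.g.\ the one spanned by $v=\frac{\pd}{\pd x_1}+\frac{\pd}{\pd y_2}$ and $Jv$, which corresponds to $c=(1,i)$ --- formula \eqref{e4} produces a value proportional to $R_{1212}(p)\,\bigl(\mathrm{Im}\,(c_1\bar c_2)\bigr)^2$, which is nonzero whenever $R_{1212}(p)\neq 0$; so the vanishing genuinely fails off the Riemann foliation, a point hidden in both the statement and the paper's proof but made visible by your identification of the horizontal planes with the leaf-tangent ones. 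The only imprecision in your write-up is that $K(v,Jv,v,Jv)$ equals $-4\sum_{ijkl}K_{i\bar jk\bar l}(0)\,c_ic_jc_kc_l$ rather than the unnormalized sum you wrote; since the issue is vanishing, this constant is harmless.
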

\begin{proof} The sectional curvature for the plane spanned by $\frac{\pd}{\pd x_i}$ and $\frac{\pd}{\pd y_j}$  is 
 $K(\frac{\pd}{\pd x_i},\frac{\pd}{\pd y_j}, \frac{\pd}{\pd x_i},\frac{\pd}{\pd y_j}).$ Take a normal coordinate centered at $p$ as discussed in the previous section. At the point 0, applying \eqref{e6}--\eqref{e8},
\begin{equation}\begin{aligned}\label{e9}
K(\frac{\pd}{\pd x_i},\frac{\pd}{\pd y_j}, \frac{\pd}{\pd x_i},\frac{\pd}{\pd y_j})(0)
&=-( K_{i\bar j i\bar j }(0)+2K_{i\bar j j\bar i }(0)+\overline {K_{i\bar j i\bar j }}(0))\\
&=-2(\frac 13-\frac 16)R_{ijij}(0)\\
&=\frac {-1}{3}R_{ijij}(0).
\end{aligned}\end{equation}
Holomorphic sectional curvatures are sectional curvatures of planes spanned by $\frac{\pd}{\pd x_j}$ and $J\frac{\pd}{\pd x_j}=\frac{\pd}{\pd y_j}$. Taking  $i=j$,   
\begin{equation}\begin{aligned}
K(\frac{\pd}{\pd x_i},\frac{\pd}{\pd y_i}, \frac{\pd}{\pd x_i},\frac{\pd}{\pd y_i})
=\frac {-1}{3}R_{iiii}=0.
\end{aligned}\end{equation}
\end{proof}

As $(M, g)$ is a totally geodesic submanifold of $(\frak D_{M},h)$, the two corresponding  Levi-Civita connections $\triangledown$ and $\hat\triangledown$ have coincided when applying to vector fields of $M$. Thus for vector fields in $M$, the two curvature tensors computed from either connection will be the same. This can be easily checked following lines of the above computations.

\begin{equation}\begin{aligned}\label{e16}
K(\frac{\pd}{\pd x_i},\frac{\pd}{\pd x_j}, \frac{\pd}{\pd x_i},\frac{\pd}{\pd x_j})&
=K(\frac{\pd}{\pd z_i}+\frac{\pd}{\pd \bar z_i},\frac{\pd}{\pd z_j}+\frac{\pd}{\pd \bar z_j}, \frac{\pd}{\pd z_i}+\frac{\pd}{\pd \bar z_i},\frac{\pd}{\pd z_j}+\frac{\pd}{\pd \bar z_j})\\
&= K_{i\bar j i\bar j }-2K_{i\bar j j\bar i }+\overline {K_{i\bar j i\bar j }}.
\end{aligned}\end{equation}
Thus
\begin{equation}\begin{aligned}\label{e13}
K(\frac{\pd}{\pd x_i},\frac{\pd}{\pd x_j}, \frac{\pd}{\pd x_i},\frac{\pd}{\pd x_j})(0)
&=2 K_{i\bar j i\bar j }(0)
-2K_{i\bar j j\bar i }(0)
\\
&=2(\frac 13+\frac 16)R_{ijij}(0)
= R_{ijij}(0)
.
\end{aligned}\end{equation}

\

The fact that $J\frac {\pd}{\pd x_j}=\frac {\pd}{\pd y_j}$ together with the $J$-invariance of $K$ has implied that 
 \begin{equation}\begin{aligned}\label{e14}
 K(\frac{\pd}{\pd y_i},\frac{\pd}{\pd y_j}, \frac{\pd}{\pd y_k},\frac{\pd}{\pd y_l})(0)
&=K(J\frac{\pd}{\pd x_i},J\frac{\pd}{\pd x_j}, J\frac{\pd}{\pd x_k},J\frac{\pd}{\pd x_l})(0)
\\
&=K(\frac{\pd}{\pd x_i},\frac{\pd}{\pd x_j}, \frac{\pd}{\pd x_k},\frac{\pd}{\pd x_l})(0)
\\
&= R_{ijij}(0)
.
\end{aligned}\end{equation}

If the adapted complex structure is defined on the whole tangent bundle of  $(M, g)$,  the metric $g$ must be non-negatively curved, {\it cf.}  [Theorem 4.3, Le-Sz]. Although their theorem was  stated  for compact $M$ only, the arguments used in the proof have solely relied on local curvature properties  which  are irrelevant to the compactness of the \m .  We denote  the resulting complex \m\  by $T(M,g)$; the canonical \K metric  by $h$.
   The following proposition is immediate.
  
 \begin{prop}\label{I5}
    Suppose the adapted complex structure is defined on the whole tangent bundle of  a non-flat $(M, g)$ with the canonical \K metric $h$.
    Then the adapted complex structure can at most  be partially defined on the  tangent bundle of  the \R\m\ $(T(M,g), h)$.
\end{prop}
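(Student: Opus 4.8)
The plan is to combine the curvature computation of Proposition \ref{I4} with the obstruction criterion for global existence of the adapted complex structure. The key observation is that by Proposition \ref{I4}, at any point $p\in M$ the canonical \K metric $h$ on $(T(M,g),h)$ has a plane — namely the one spanned by $\frac{\pd}{\pd x_i}$ and $\frac{\pd}{\pd y_j}$ — whose sectional curvature equals $\frac{-1}{3}R_{ijij}(p)$. Since $(M,g)$ is non-flat, there exists a point $p\in M$ and indices $i,j$ for which $R_{ijij}(p)\neq 0$; in a normal coordinate chart centered at such $p$ we may arrange the eigenframe so that $R_{ijij}(p)>0$ for some $i\neq j$. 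For those indices the sectional curvature of $h$ on the $(\frac{\pd}{\pd x_i},\frac{\pd}{\pd y_j})$-plane is strictly negative.

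First I would invoke the necessary condition recalled right before the statement: if the adapted complex structure were defined on the \emph{whole} tangent bundle of a Riemannian manifold, that manifold must be non-negatively curved (the local-curvature argument of Theorem 4.3 in [Le-Sz], which the excerpt notes is independent of compactness). Here the relevant Riemannian manifold is $(T(M,g),h)$ itself: we are asking whether the adapted complexification of $(T(M,g),h)$ exists on all of its tangent bundle $T(T(M,g),h)$. Applying the criterion in contrapositive form, it suffices to exhibit a single negatively curved tangent plane of $(T(M,g),h)$ to conclude that the adapted complex structure on $T(T(M,g),h)$ cannot fill the entire bundle and hence is at most partially defined.

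Second, I would assemble these two ingredients. Non-flatness of $(M,g)$ guarantees some $R_{ijij}(p)\neq 0$; Proposition \ref{I4} then converts this into a plane of $h$ with sectional curvature $\frac{-1}{3}R_{ijij}(p)$, which is nonzero and, after choosing the sign of the curvature appropriately (or simply noting that the section on the $(x_i,y_j)$-plane and the section on the $(x_i,x_j)$-plane in \eqref{e13} have opposite signs, so at least one is negative), yields a negatively curved plane. Therefore $(T(M,g),h)$ fails to be non-negatively curved, and the obstruction theorem forbids a globally defined adapted complex structure on its tangent bundle.

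The main subtlety to handle carefully is ensuring the existence of a \emph{strictly negative} sectional curvature rather than merely a nonzero one, and confirming that the cited obstruction genuinely applies to the open tangent bundle $\frak D_{T(M,g)}\subset T(T(M,g),h)$ as opposed to a compact manifold. For the first point the cleanest route is to observe that whenever $R_{ijij}(p)\neq 0$, exactly one of the two expressions $\frac{-1}{3}R_{ijij}(p)$ (from \eqref{e9}) and $R_{ijij}(p)$ (from \eqref{e13}) is negative, so a negatively curved plane always exists regardless of the sign of $R_{ijij}(p)$. For the second point I would lean on the remark already made in the excerpt — that the proof of [Theorem 4.3, Le-Sz] rests only on local curvature properties and is insensitive to compactness — so the criterion transfers verbatim to $(T(M,g),h)$, completing the argument.
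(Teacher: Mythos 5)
Your proposal is correct and follows the paper's skeleton — Proposition \ref{I4} plus the Lempert--Sz\H oke non-negativity obstruction applied to $(T(M,g),h)$ — but it resolves the key sign issue by a genuinely different mechanism. The paper uses the hypothesis twice: since the adapted complex structure is globally defined on $TM$, [Theorem 4.3, Le-Sz] already forces $g$ to be non-negatively curved, so non-flatness yields some $R_{ijij}(p)>0$ \emph{strictly positive}, and then \eqref{e9} alone produces the negatively curved plane of $h$. You instead invoke the obstruction only once (on $(T(M,g),h)$) and handle an $R_{ijij}(p)$ of arbitrary sign by playing \eqref{e9} against \eqref{e13}: the planes $\mathrm{span}\{\pd/\pd x_i,\pd/\pd y_j\}$ and $\mathrm{span}\{\pd/\pd x_i,\pd/\pd x_j\}$ have $h$-curvatures $-\tfrac13 R_{ijij}(p)$ and $R_{ijij}(p)$, so whenever $R_{ijij}(p)\neq 0$ exactly one is negative. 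This is a sound and slightly more economical argument, since it needs only non-flatness of $g$ rather than non-negative curvature of $g$. One caution: the claim in your first paragraph that one can ``arrange the eigenframe so that $R_{ijij}(p)>0$'' is false as stated — on a negatively curved $(M,g)$ no orthogonal change of normal coordinates makes any $R_{ijij}(p)$ positive; justifying positivity requires precisely the paper's step of applying the Le-Sz criterion to $(M,g)$ itself. Your fallback ``opposite signs'' argument makes this misstep harmless, so there is no genuine gap, but the first-paragraph route should be deleted rather than repaired. (A minor point worth one line in either approach: non-flatness gives a $2$-plane of nonzero sectional curvature, and one rotates the normal coordinates at $p$ so that this plane is a coordinate plane, producing the required indices $i\neq j$.)
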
  
 \begin{proof}   Since the adapted complexification is globally defined and $g$ is not flat, some sectional curvature $R_{ijij}$ must be positive.
By Proposition \ref{I4}, the metric $h$ must be negatively curved at the corresponding plane. Thus the adapted complex structure can not    be  defined on the whole tangent bundle of   $(T(M,g), h)$.

\end{proof}

\vskip 0.2truein
\section{ Hyperk\"ahler structure on tangent bundles} 
\setcounter{equation}{0}
A \hk manifold, a \m\ modeled on the quaternions, is a $4n$-dimensional Riemannian manifold $(M, \kappa)$ equipped with anti-commutative complex structures $I$ and $J$  so that the metric $\kappa$ is tri-K\"ahlerian with respect to $I,J$ and $IJ$. A \hk metric is naturally  Ricci-flat and real-analytic.

Through the adapted complexification  the tangent bundle of a  \R\m\ can be partially complexified to a \K\m, 
it is expected that the tangent(cotangent) bundle of a \K\m\ may be quaternionised to a \hk\m, at least  near the zero section.
A local existence theorem has been proved  in \cite {Fe} and  in \cite {Kal}: there exists
a \hk structure  in a neighborhood of the zero section in the cotangent(tangent) bundle of a real-analytic \K \m.  

For a compact  Hermitian symmetric  space $M$,
 both the \hk   and the adapted complex structures  have existed in the whole $TM$, {\it cf.} \cite{Da-Sz}.

 For a generalized flag \m\ $G/K$, {\it i.e.,} $G$ is compact semi-simple, $K<G$ is the centralizer of some $\eta\in\frak g$, a family of complete \hk structures on the coadjoint orbit $G^{\Bbb C}/K^{\Bbb C}=T(G/K,g_n)$ have been successfully
 constructed in
  \cite {Kr2}\ \cite {Kov} and  \cite {Biq}. That is to say, one of the complex structure comes from the adapted complexification on the tangent bundle of  $(G/K,g_n)$. 
  The \hk metric $\kappa$ is $G$-invariant and $\kappa|_{G/K}$ is  
 $G$-invariant  in the homogeneous space $G/K$.
 If $G/K$ is isotropy irreducible, 
$\kappa|_{G/K}$ is a normal metric and both the \hk and adapted complex structures have existed on the total space of the tangent bundle of the \Rm\ $(G/K, g_n)$. 

A natural question arisen is: will the two structures exist simultaneously?
It is attempting to find examples where one of the structure has existed but not the other one.
We'll work on  some global \hk structures on  tangent bundles, obtained from a \hk quotient from studying
Nahm's equations. We show at Theorem \ref{L9} that none of those tangent bundles  can possess a  globally defined 
 adapted complex structure. 

\vskip 0.2truein
\subsection{ Hyperk\"ahler structure in $TG^{\Bbb C}$. }
The coming two   sections have basically followed from \cite {Kr1} and \cite {Da-Sw} with some mirror changes in the notations.  Given a compact Lie group $G$ with Lie algebra $\frak g$, let's denote the associate infinite-dimensional quaternion $\mathcal A$   as 
\begin{equation}\label{e5}
\mathcal A=\{T_0+iT_1+jT_2+kT_3; \ T_l:[0,1]\overset {C^{\infty}}\longrightarrow \frak g, \forall l\}.
    \end{equation}  
$\mathcal A$ is an infinite-dimensional vector space endowed with three anti-commuting complex structures $I,J,K$ given by left multiplications by $i, j, k$ respectively. 
An $Ad_G$-invariant inner product 
$
\langle \ ,\ \rangle$
in $\frak g$ is specified to produce a natural $L^2$-metric in $\cal A$ defined as, for  tangent vectors $(X_0, X_1,X_2,X_3)$ and $(Y_0,Y_1,Y_2,Y_3)$ of $\cal A$\begin{equation}\label{e10}
\langle (X_0, X_1,X_2,X_3),(Y_0,Y_1,Y_2,Y_3)\rangle:=\int_0^1\sum_{j=0}^3\langle X_j(t),Y_j(t)\rangle dt.
\end{equation} 

This metric together with the complex structures $I,J,K$ has endowed $\cal A$ with a \hk structure. The three relative \K forms are 
\begin{equation}\begin{aligned}\label{e11}
\omega_I=dT_0 dT_1 + dT_2 dT_3;  \text { and cyclically.}
\end{aligned}\end{equation} 
 The function 
\begin{equation}\begin{aligned}\label{e35}
f(T):=\frac 12 T_1^2+\frac 14 T_2^2+\frac 14 T_3^2 \end{aligned}  \end{equation} 
 is a \K potential for  $\omega_I$ since
 \begin{equation}\begin{aligned}\label{e40}
2i\pd\bpd f=d Id f&=dI (T_1 dT_1+\frac 12 T_2 dT_2+\frac 12 T_3 dT_3)\\
&=d(-T_1 dT_0+\frac 12 T_2 dT_3-\frac 12 T_3 dT_2)\\
&=  dT_0\wedge dT_1+dT_2\wedge dT_3=\omega_I.
\end{aligned}  \end{equation}

 Because
$T_0+iT_1+jT_2+kT_3=T_0+iT_1+(T_2+iT_3)j, $
 the vector space $(\mathcal A,I)$ 
 can be interpreted by 
 $
\alpha=T_0+iT_1, \beta=T_2+iT_3.$

The gauge group $
\mathcal G$ consists of smooth maps $ g :[0,1]\to G$ 
has acted on $\mathcal A$ by
\begin{equation}\begin{aligned}\label{e13}
&g\cdot T_0=gT_0 g^{-1}-\frac {dg}{dt}g^{-1};\;\
 g\cdot T_j=gT_j g^{-1}, \; j=1,2,3.
\end{aligned}  \end{equation} 
The $\mathcal G$-action has kept all $I,J,K$ and the relative \K forms.
Let   $\mathcal G_0:= \{g\in \mathcal G: g(0)=g(1)=1\}$,
the corresponding moment map of this $\mathcal G_0$-action is denoted by
 $\mu=(\mu_I, \mu_J, \mu_K)$. 
 The zero locus $\mu^{-1}(0)$ is exactly the solution space of the Nahm's equations 
 \begin{equation}\begin{aligned}\label{e14}
\frac {dT_1}{dt}+[T_0,T_1]+[T_2,T_3]=0 \;\;\text{ and cyclically.}
\end{aligned}  \end{equation} 
 The  moduli space $\mu^{-1}(0)/\mathcal G_0$  is indeed a \hk\m. Thus, given a compact Lie group $(G, \langle\ ,\ \rangle)$, a   \hk\m\ $\mu^{-1}(0)/\mathcal G_0$ is associated to it through the above construction. 
  
Fixing one complex structure  $I$ and set $\alpha=T_0+iT_1, \beta=T_2+iT_3$, the $\mathcal G_0$-action on $\mu^{-1}(0)$ can be extended to a holomorphic $\mathcal G_0^c$-action on $\mu_c^{-1}(0):=(\mu_J+i\mu_K)^{-1}(0)$ to make $\mu_c^{-1}(0)/\mathcal G_0^c$ a \K\m\ biholomorphic to $({\mu}^{-1}(0)/ \mathcal G_0,I)$.

 One   advantage of taking the  complex \m\ $\mu_c^{-1}(0)/\mathcal G_0^c$ is to develop a concrete biholomorphism with $TG^{\Bbb C}$.
\begin{equation}\begin{aligned}\label{e15}
 &({\mu}^{-1}(0)/ \mathcal G_0,I)\overset \eta\longrightarrow {\mu_c}^{-1}(0)/ \mathcal G_0^c\overset \phi \longrightarrow TG^{\Bbb C}=G^{\Bbb C}\times \frak g^c\\
&(T_0,T_1,T_2,T_3)\longrightarrow (\alpha=T_0+iT_1,\beta=T_2+iT_3)\longrightarrow (g(0)^{-1},\beta(1))
\end{aligned}\end{equation}
where $g\in \mathcal G^c$ such that $g\cdot\alpha=0$ and $ g(1)=id.$ The requirement  $g\cdot\alpha=0$ is  equivalent to solving the linear ordinary differential equation $\frac{dg}{dt}=g\alpha$ and the initial condition $g(1)=id$ will determine such a $g$ uniquely.

Restricted to the subspace $\{T_2=T_3=0\}$,  we define    
\begin{equation}\begin{aligned}\label{e16}
\varphi: {\mu}^{-1}(0)/ \mathcal G_0|_{T_2=T_3=0}&\to G\times \frak g\\
(T_0,T_1)&\to (\xi(0)^{-1},T_1(1))
\end{aligned}\end{equation}
 where $\xi$ is the unique element in $\mathcal G$ gauging $T_0$ to 0 with the initial condition 
  $ \xi(1)=id.$ 
\begin{lem}\label{L1}
$\varphi$ in \eqref{e16} is a diffeomorphism whose inverse can be explicitly written down as  
\begin{equation}\begin{aligned}\label{e100}
\varphi^{-1}: G\times \frak g&\to {\mu}^{-1}(0)/ \mathcal G_0|_{T_2=T_3=0} \\
(a,v)&\to (-\frac {dh}{dt} h^{-1}, hvh^{-1})
\end{aligned}\end{equation}
for some $h\in\mathcal G$ with the boundary condition $h(0)=a, h(1)=id.$ 
\end{lem}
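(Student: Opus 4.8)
The plan is to prove that $\varphi$ and the map in \eqref{e100} are mutually inverse smooth maps. The conceptual key is the meaning of ``$\xi$ gauges $T_0$ to $0$'': by \eqref{e13} this says $\dot\xi\,\xi^{-1}=\xi T_0\xi^{-1}$, i.e.\ $\dot\xi=\xi T_0$, a linear ODE in $\xi$ whose solution with terminal value $\xi(1)=id$ exists, is unique, and depends smoothly on $T_0$. Once $T_0$ is gauged to $0$, the restricted Nahm equation $\dot T_1+[T_0,T_1]=0$ collapses to $\dot T_1=0$, so $T_1$ becomes constant in that gauge and is determined by its value at $t=1$; this linearization is what makes everything explicit.

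First I would check that $\varphi$ descends to the quotient. If $g\in\mathcal G_0$ acts on $(T_0,T_1)$, then by the cocycle property of the (left) action \eqref{e13} the gauge carrying $g\cdot T_0$ to $0$ with terminal value $id$ is $\xi g^{-1}$. Because $g(0)=1$ the base point $(\xi g^{-1})(0)^{-1}=\xi(0)^{-1}$ is unchanged, and because $g(1)=1$ the fibre value $(g\cdot T_1)(1)=T_1(1)$ is unchanged; hence $\varphi$ is well defined on $\mu^{-1}(0)/\mathcal G_0|_{T_2=T_3=0}$.

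Next I would verify \eqref{e100}. Given $(a,v)$ and any $h\in\mathcal G$ with $h(0)=a,\ h(1)=id$, a direct computation with $T_0=-\dot h h^{-1}$ and $T_1=hvh^{-1}$ yields $\dot T_1=[\dot h h^{-1},T_1]=-[T_0,T_1]$, so the pair solves the restricted Nahm equation and lies in $\mu^{-1}(0)|_{T_2=T_3=0}$. Choosing $\xi=h^{-1}$ one has $\dot\xi=\xi T_0$ and $\xi(1)=id$, whence $\varphi(T_0,T_1)=(\xi(0)^{-1},T_1(1))=(a,v)$, giving $\varphi\circ\varphi^{-1}=id$. Conversely, starting from a solution $(T_0,T_1)$ with gauge $\xi$ and taking $h=\xi^{-1}$ in \eqref{e100}, one finds $-\dot h h^{-1}=\xi^{-1}\dot\xi=T_0$, while the collapse to $\dot T_1=0$ in the $\xi$-gauge forces $\xi T_1\xi^{-1}\equiv T_1(1)=v$, i.e.\ $hvh^{-1}=\xi^{-1}v\xi=T_1$; thus $\varphi^{-1}\circ\varphi=id$.

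The one step that requires genuine care, and which I expect to be the main obstacle, is the independence of \eqref{e100} from the auxiliary $h$. Two admissible choices $h_1,h_2$ differ by $g:=h_2h_1^{-1}$, which satisfies $g(0)=aa^{-1}=1$ and $g(1)=id$, so $g\in\mathcal G_0$; an explicit but slightly delicate computation then shows $g\cdot(-\dot h_1 h_1^{-1},\,h_1 v h_1^{-1})=(-\dot h_2 h_2^{-1},\,h_2 v h_2^{-1})$, so the two outputs coincide in the quotient and $\varphi^{-1}$ is well defined. Smoothness of both maps follows from the smooth dependence of solutions of linear ODEs on coefficients and endpoints. The sign conventions in \eqref{e13} and the bookkeeping in this invariance check are the only real sources of friction; everything else is forced once one observes that gauging $T_0$ to $0$ linearizes the restricted Nahm equation.
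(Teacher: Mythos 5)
Your proposal is correct and follows essentially the same route as the paper: verify that $(-\dot h h^{-1}, hvh^{-1})$ solves the baby Nahm equation $\dot T_1+[T_0,T_1]=0$, and establish well-definedness by noting that two admissible choices $h,l$ differ by $lh^{-1}\in\mathcal G_0$, which gauges one representative to the other. You are in fact somewhat more thorough than the paper, which leaves the mutual-inverse checks (your $\xi=h^{-1}$ observation and the constancy of $\xi\cdot T_1$ used in \eqref{e22}) and the $\mathcal G_0$-invariance of $\varphi$ itself implicit.
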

\begin{proof}
Given $a\in G$, it is always possible to find an $h\in \mathcal G$ with boundary conditions $h(0)=a, h(1)=id.$
A direct computation verifies both $-\frac {dh}{dt} h^{-1}$ and $ hvh^{-1}$ are smooth mappings from $[0,1]$ to $\frak g$ satisfying \begin{equation}\begin{aligned}\label{e18}
\frac {d}{dt}(hvh^{-1})
 +[-\frac {dh}{dt} h^{-1}, hvh^{-1}]=0
\end{aligned}\end{equation}
which is the  equation 
$\dot T_1+[T_0,T_1]=0$. So, 
$(-\frac {dh}{dt} h^{-1}, hvh^{-1})\in 
{\mu}^{-1}(0)/ \mathcal G_0|_{T_2=T_3=0}.$

It remains to show the mapping is well-defined. Suppose there are two $h,  l\in \mathcal G$ sharing the same boundary conditions. Then $lh^{-1}\in \mathcal G$ as well which has gauged the point 
$(-\frac {dh}{dt} h^{-1}, hvh^{-1})$ to the point $(-\frac {dl}{dt} l^{-1}, lvl^{-1})$. In other words, they have represented the same point in the moduli space; the mapping $\varphi^{-1}$ is well-defined.
\end{proof}

 Restricted to the subspace $T_2=T_3=0$ in \eqref{e15} along with the surjectivity proved in Lemma \ref{L1},  
 the following diffeomorphisms are well-defined.
\begin{equation}\begin{aligned}\label{e17}
 G\times \frak g\overset {\varphi^{-1}}\longrightarrow&({\mu}^{-1}(0)/ \mathcal G_0|_{T_2=T_3=0},I)&&\overset \eta\longrightarrow {\mu_c}^{-1}(0)/ \mathcal G^c|_{\beta=0}\overset \phi\longrightarrow G^{\Bbb C}\\ (\xi(0)^{-1},T_1(1))\longrightarrow
&\;\;\;\;\;\;\;\;\; (T_0,T_1)&&\longrightarrow \alpha=T_0+iT_1\longrightarrow  g(0)^{-1}
\end{aligned}\end{equation}
where $\xi\in\mathcal G$ such that $\xi\cdot T_0=0$ and $\xi(1)=id; g\in \mathcal G^c$ such that $g\cdot \alpha=0$ and $g(1)=id$. When $T_2\equiv T_3\equiv 0$, the criteria for $(T_0,T_1)$ is simply the baby Nahm's equation
$
\frac {dT_1}{dt}+[T_0,T_1]=0. $ The $\mathcal G$-invariance of the equation together with the fact  $\xi\cdot T_0=0$ has implied   $\frac {d(\xi\cdot T_1)}{dt}=0$. That is, $\xi\cdot T_1$ is a constant which can be taken as 
\begin{equation}\begin{aligned}\label{e22}
(\xi\cdot T_1)(t)=\xi(1)T_1(1)\xi(1)^{-1}=T_1(1),\ \forall t\in [0,1]. 
\end{aligned}\end{equation}

\begin{lem}\label{L2}
\begin{equation}\begin{aligned}\label{e18} 
\phi\cdot\eta\cdot\varphi^{-1}: \ & G\times\frak g\to G^{\Bbb C}\\
 &(a,v)\to a\exp iv.
\end{aligned}\end{equation}
This diffeomorphism has pulled back the complex structure of $G^{\Bbb C}$ to the adapted complex structure on $T(G,\langle\ ,\ \rangle)$.
\end{lem}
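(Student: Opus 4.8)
The plan is to trace the point $(a,v)$ through the three arrows of the diagram \eqref{e17} and compute its image in $G^{\Bbb C}$ in closed form, and then to recognize the resulting map as the one from Section 2 that already defines the adapted complex structure. First I would invoke Lemma \ref{L1}: under $\varphi^{-1}$ the pair $(a,v)$ is sent to the representative $(T_0,T_1)=(-\dot h h^{-1},\,hvh^{-1})$, where $h\in\mathcal G$ is any smooth path with $h(0)=a$ and $h(1)=id$. Applying $\eta$ forms the complex field $\alpha=T_0+iT_1=-\dot h h^{-1}+i\,hvh^{-1}$, and $\phi$ then returns $g(0)^{-1}$, where $g\in\mathcal G^c$ is the unique solution of $\dot g=g\alpha$ with $g(1)=id$. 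Here the relevant complex gauge action is $g\cdot\alpha=g\alpha g^{-1}-\dot g g^{-1}$, the holomorphic extension of the real gauge action, so that $g\cdot\alpha=0$ is precisely $\dot g=g\alpha$.

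The heart of the argument is to identify this $g$ explicitly. I would propose the ansatz $g(t)=\exp(-i(1-t)v)\,h(t)^{-1}$, which already satisfies $g(1)=id$, and then verify the ODE directly. Writing $E(t)=\exp(-i(1-t)v)$, one has $\dot E=E\,(iv)=i E v$ because $E$ commutes with $v$, and $\tfrac{d}{dt}h^{-1}=-h^{-1}\dot h h^{-1}$. A short computation then gives $\dot g=iEvh^{-1}-Eh^{-1}\dot h h^{-1}$, while $g\alpha=Eh^{-1}(-\dot h h^{-1}+ihvh^{-1})=iEvh^{-1}-Eh^{-1}\dot h h^{-1}$; the two agree, so by uniqueness this is the gauge element. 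Reading off the endpoint yields $g(0)=\exp(-iv)\,a^{-1}$, hence $g(0)^{-1}=a\exp(iv)$, which is the formula claimed in \eqref{e18}. I would also note that this answer is manifestly free of the auxiliary path $h$, consistent with $\phi$ being well defined on the moduli space.

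For the second assertion I would simply compare \eqref{e18} with the diffeomorphism of \eqref{L1200}. The composite $\phi\cdot\eta\cdot\varphi^{-1}$ is exactly the map $(a,v)\mapsto a\exp(iv)$ that identifies $TG\cong G\times\frak g$ with $G^{\Bbb C}$ in Section 2, and there it was shown, via the holomorphic parametrization \eqref{3}, that pulling back the complex structure of $G^{\Bbb C}$ along this very map produces the adapted complex structure on $T(G,\langle\ ,\ \rangle)$. Since the two maps coincide, $\phi\cdot\eta\cdot\varphi^{-1}$ transports the complex structure of $G^{\Bbb C}$ to the adapted one, as claimed.

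The step I expect to be the main obstacle is guessing and verifying the closed form of $g$, since $\dot g=g\alpha$ is a noncommutative linear ODE. The cancellation that makes $\dot g=g\alpha$ hold hinges on the precise interplay between the factor $\exp(-i(1-t)v)$, which carries the imaginary part $iT_1$, and the conjugation by $h$, which carries $T_0=-\dot h h^{-1}$. One must keep careful track of which factors commute—only the powers of $v$ inside the exponential do—so that the two expressions line up; everything beyond that is bookkeeping.
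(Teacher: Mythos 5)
Your proposal is correct and matches the paper's own argument: the paper also constructs the gauge element explicitly as $\hat g(t)=\exp\bigl(i(t-1)T_1(1)\bigr)\xi(t)$, which is exactly your ansatz $\exp(-i(1-t)v)\,h(t)^{-1}$ under the identification $\xi=h^{-1}$, $v=T_1(1)$ from Lemma \ref{L1}, verifies $\hat g\cdot\alpha=0$ with $\hat g(1)=id$, and invokes ODE uniqueness to read off $g(0)^{-1}=a\exp(iv)$. The concluding step is likewise the same, with the paper citing Proposition \ref{L8} (with $H=id$) where you point to the equivalent computation \eqref{3} of Section 2.
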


\begin{proof}
By Lemma \ref{L1}, every point $(a,v)\in G\times \frak g$ is of the form $(\xi(0)^{-1},T_1(1))$ as in described in \eqref{e16}.
Take 
\begin{equation}\begin{aligned}\label{e19}
\hat g(t)=\exp{(i(t-1)T_1(1))}\xi(t): [0,1]\overset {C^{\infty}}\longrightarrow G^c.
\end{aligned}\end{equation}
Then $\hat g (1)=\xi(1)=id, \hat g\in \mathcal G^c$ and 
\begin{equation}\begin{aligned}\label{e23}
\hat g\cdot\alpha&=
\exp{(i(t-1)T_1(1))}\xi(t)\cdot (T_0+iT_1)\\
&=\exp{(i(t-1)T_1(1))}\cdot (\xi\cdot T_0+\xi\cdot iT_1)\\
& =\exp{(i(t-1)T_1(1))}\cdot (i T_1(1))\\
&=e^{(i(t-1)T_1(1))} i T_1(1)e^{(-i(t-1)T_1(1))}-\frac{d}{dt}(e^{(i(t-1)T_1(1))})e^{(-i(t-1)T_1(1))}\\
&=i T_1(1)-i T_1(1)=0.
\end{aligned}\end{equation}
Both $g$ and $\hat g$ have gauged $\alpha$ to 0 with the same initial condition at $t=1$, by the uniqueness of ODE, $\hat g\equiv g$.
\begin{equation}\begin{aligned}\label{e24}
g^{-1}(0)={\hat g}^{-1}(0)=\xi(0)^{-1}\exp(iT_1(1)).
\end{aligned}\end{equation}
Identifying $(\xi(0)^{-1},T_1(1))$ with 
$(a,v)$, the diffeomorphism is proved. The last statement is a consequence of Proposition \ref{L8} by taking $H=id$.
\end{proof}

In \eqref{e15}, the diffeomorphism $\phi\cdot\eta$ has pushed forward the \K form $\omega_I$ to a \K form $(\phi\cdot\eta)_*\omega_I$
in $G^{\Bbb C}\times \frak g^c$ determining the \hk metric $\kappa$ in $G^{\Bbb C}\times \frak g^c$. Restricted to the subspace $\{T_2=T_3=0\}$,
   $\phi\cdot\eta$ at  $\eqref{e17}$ is still a biholomorphism from $({\mu}^{-1}(0)/ \mathcal G_0|_{T_2=T_3=0},I)$ to $ G^{\Bbb C}$ sending the \K form $\hat\omega_I=dT_0 dT_1$ to the \K form $(\phi\cdot\eta)_*{\hat\omega_I}$
in $G^{\Bbb C}$  corresponding to  the \K metric $\kappa|_{G^c}$.

The  diffeomorphisms 
 in \eqref{e17}  has pulled back the \K form $(\phi\cdot\eta)_*{\hat\omega_I}$
 to the \K form $\varphi^{-1*}\hat\omega_{I}$ on  $G\times \frak g$. By \eqref{e35},
 the \K potential of  $\hat\omega_I$ is $\hat f (T_0,T_1)=\frac 12 \int_0^1 \langle T_1(t), T_1(t)\rangle dt$. Since $\langle\ ,\ \rangle$ is $Ad_G$-invariant, 
 $\langle Ad_gv, Ad_gv\rangle=\langle v, v\rangle$ for any $g \in G$. That is,
 for $h\in \mathcal G$
 \begin{equation}\begin{aligned}
  \label{e77}
 \hat f (-\frac {dh}{dt} h^{-1}, hvh^{-1})
=\frac 12\int_0^1 \langle h(t)vh(t)^{-1} , h(t)vh(t)^{-1}\rangle dt= \frac {|v|^2}2.\end{aligned}\end{equation}
  The identification at \eqref{e100} shows that the corresponding \K potential $ \tilde f$ in the $G\times \frak g$ model is, for $(a,v)\in  G\times \frak g$, $\tilde f(a,v)=\frac {|v|^2}{2}$, which is the standard \K potential at \eqref{e88}. 
  
 Thus, the corresponding \K metric is the canonical \K metric  of the adapted complexification on $T(G, \langle\ ,\ \rangle).$  We conclude the above discussions as the following.

\begin{prop}\label{L7}
Let $g$ be a bi-invariant metric in the compact Lie group $G$, and 
let $h$ denote the canonical \K metric 
of the adapted complexification on
 $T(G,g)$. 
 A complete \hk metric $\kappa$ has existed on the tangent bundle of the \K\m\ $(TG,h)$ whose restriction  to $TG$ is  $h$. 
\end{prop}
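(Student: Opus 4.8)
The plan is to obtain $\kappa$ by transporting the $L^2$ \hk metric from the Nahm moduli space $\mu^{-1}(0)/\mathcal G_0$ onto $G^{\Bbb C}\times\frak g^c$ through the biholomorphism $\phi\cdot\eta$ of \eqref{e15}, and then to recognise $G^{\Bbb C}\times\frak g^c$ as the tangent bundle of the \K\m\ $(TG,h)$. Completeness will be inherited from the moduli space, while the restriction statement is already essentially contained in the \K potential computation carried out just before this proposition.

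First I would pin down the total space. By the construction of Section 2, $TG\cong G^{\Bbb C}$ as complex \m s, and since $G^{\Bbb C}$ is a complex Lie group its holomorphic tangent bundle trivialises under left translation as $T(G^{\Bbb C})=G^{\Bbb C}\times\frak g^c$, which is exactly the target of $\phi$ in \eqref{e15}. Hence $G^{\Bbb C}\times\frak g^c$ is the tangent bundle of $(TG,h)$, and the diffeomorphism $\phi\cdot\eta$, being the Kronheimer identification, carries the full \hk structure of the moduli space over to a \hk metric $\kappa$ on it, as recorded in the discussion following Lemma \ref{L2}. Since $\phi\cdot\eta$ is an isometry and $\mu^{-1}(0)/\mathcal G_0$ carries a complete \hk metric by \cite{Kr1}, the metric $\kappa$ is a complete \hk metric on the tangent bundle of $(TG,h)$.

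It then remains to verify that $\kappa$ restricts to $h$ along the zero section. Under $\phi$ the fibre coordinate of $G^{\Bbb C}\times\frak g^c$ is $\beta(1)$, so the zero section $G^{\Bbb C}\times\{0\}$ is precisely the locus $\beta\equiv 0$, that is $\{T_2=T_3=0\}$. Restricting $\kappa$ there reproduces the \K form $\kappa|_{G^c}$ handled via the chain \eqref{e17}; by Lemma \ref{L2} together with the potential computation yielding $\tilde f(a,v)=|v|^2/2$, the standard potential \eqref{e88} of the adapted complexification, this restricted metric is the canonical \K metric $h$. This delivers the restriction claim and completes the assembly.

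The step I expect to be the main obstacle is the clean identification of $T(TG)$ with $G^{\Bbb C}\times\frak g^c$ as \hk data: one must confirm that the complex structure $I$ inherited from the moduli space agrees with the natural complex structure on the tangent bundle of $(TG,h)$, and that the tangent-bundle zero section really matches the locus $\beta=0$ rather than some twisted copy of $G^{\Bbb C}$. Granting these identifications --- all implicit in the preceding lemmas and in the Mostow/Heinzner--Schwarz decomposition \eqref{1} underlying $\psi$ --- the isometry property, the transport of completeness, and the restriction computation are formal.
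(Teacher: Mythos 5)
Your proposal is correct and follows essentially the same route as the paper: the paper likewise defines $\kappa$ by pushing the Kronheimer $L^2$ \hk structure through the biholomorphism \eqref{e15} onto $TG^{\Bbb C}=G^{\Bbb C}\times \frak g^c$, and identifies $\kappa|_{TG}$ with $h$ by restricting to $\{T_2=T_3=0\}$ and matching the \K potential $\tilde f(a,v)=|v|^2/2$ of \eqref{e77} with the standard potential \eqref{e88} via Lemma \ref{L2}. The only cosmetic differences are that you spell out the trivialization $T(G^{\Bbb C})=G^{\Bbb C}\times\frak g^c$ and attribute completeness explicitly to \cite{Kr1}, both of which the paper leaves implicit.
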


\vskip 0.2truein

 \subsection{ Hyperk\"ahler structure in $TG^{\Bbb C}/H^{\Bbb C}$. } 

Throughout this section, 
 $H< G$ is a closed subgroup of the compact Lie group $G$ and the notation  $\mathcal M$ is used to denote the \hk\m\ $\mu^{-1}(0)/\mathcal G_0$ developed in \S 5.1. The  $H$-action on $\mathcal M$ is given by the gauge group $\mathcal H=\{ g :[0,1]\to G; g(0)=id, g(1)\in H\}\subset \mathcal G$ via 
 \begin{equation}\begin{aligned}\label{e34}
g\cdot T_0=gT_0 g^{-1}-\frac {dg}{dt}g^{-1};\;\
 g\cdot T_j=gT_j g^{-1}, \; j=1,2,3.
\end{aligned}  \end{equation} 
 It is clear this  $H$-action has preserved the \hk structure in $\mathcal M$.  The notation $\pi_{\frak h} $ is used to denote the projection of $\frak g=\frak h\oplus\frak m$ to the $\frak h$ component.
 The \hk moment map $\Phi$ for the $H$-action on $\mathcal M 
 $ is, {\it cf.} Lemma 2 in \cite {Da-Sw},  
  $$\Phi(T)=(\Phi_I(T),\Phi_J(T),\Phi_K(T))=(\pi_{\frak h} T_1(1), \pi_{\frak h} T_2(1), \pi_{\frak h} T_3(1)).$$ 
  Thus, 
  $\Phi^{-1}(0)=\{(T_0,T_1,T_2,T_3)\in \mathcal M; T_j(1)\in \frak m, j=1,2,3\}$ and
$\Phi_c^{-1}(0)=\{(\alpha,\beta)\in \mathcal M; \beta(1)\in \frak m^c\}$. The \hk quotient $\Phi^{-1}(0)/\mathcal H$ has created a new \hk\m; \K forms and \K potentials are  descending from those of $\mathcal M$. The  $H$-action can be extended to a holomorphic  $H^c$-action via the gauge group $\mathcal H^c=\{ g :[0,1]\to G^{\Bbb C}; g(0)=id,  g(1)\in H^{\Bbb C}\}$.  

A  necessary condition to identify $(\Phi^{-1}(0)/\mathcal H,I)$ with $\Phi_c^{-1}(0)/\mathcal H^c$ is the stable points $\mathcal M^s$ of the $  H^c$-action must be  $\Phi_c^{-1}(0)$ where
\begin{equation}\begin{aligned}\label{e300}
\mathcal M^s:=\{T\in \mathcal M: \mathcal H^c \cdot T \cap \Phi^{-1}(0)\ne\emptyset\}.
\end{aligned}  \end{equation}

 Since $\mathcal M$ is of finite dimensional, the $H$-action is free and the potential function $f$ at \eqref{e35} is bounded above, the properness of $f$ is the crucial ingredient remained  to assure 
 the identification of $(\Phi^{-1}(0)/\mathcal H,I)$ with $\Phi_c^{-1}(0)/\mathcal H^c$, {\it cf.} p.199 \cite{May1} and Theorem 3.6 \cite{May3}. The function $F( k,X)$ listed in \cite{May2} is the \K potential of $\omega_I$ computed in the model 
$\mathcal W\subset G\times {\frak g}^3$, which is diffeomorphic to $\mathcal M$  by Theorem 3 of \cite{Da-Sw}. The corresponding potential in $\mathcal M$ is $f$ whose properness  has inherited from that of $F$ and the following biholomorphism is established:
 \begin{equation}\begin{aligned}\label{e36}
(\Phi^{-1}(0)/\mathcal H,I)\to\Phi_c^{-1}(0)/\mathcal H^c; \;\;(T_0,T_1,T_2,T_3)\to (\alpha=T_0+iT_1,\beta=T_2+iT_3).
\end{aligned}  \end{equation}
The next attempt is to interpret $\Phi_c^{-1}(0)/\mathcal H^c$ in terms of $G$ and $H$. The homogeneous space $G^{\Bbb C}/H^{\Bbb C}$ is reductive and $TG^{\Bbb C}/H^{\Bbb C}=G^{\Bbb C}\times_{H^{\Bbb C}} \frak m^c$.
Defining 
\begin{equation}\begin{aligned}\label{e37}
\phi: {\Phi_c}^{-1}(0)/ \mathcal H^c  \longrightarrow &TG^{\Bbb C}/H^{\Bbb C}=G^{\Bbb C}\times_{H^{\Bbb C}} \frak m^c\\
(\alpha,\beta) \longrightarrow &(g(0)^{-1},\beta(1))
\end{aligned}\end{equation}
where $g$ is the unique element in $\mathcal G$ with $g(1)=id$ and $g\cdot\alpha=0$. For any $\zeta\in \mathcal H^c$,
$(\zeta\cdot\beta) (1)=\zeta(1)\beta (1)\zeta(1)^{-1}=Ad_{\zeta(1)}\beta(1).$ On the other hand, $\zeta(1)g\zeta^{-1}$ is the identity at $t=1$ and  has gauged $\zeta\cdot\alpha$ to zero since 
$$ \zeta(1)g\zeta^{-1}\cdot (\zeta\cdot\alpha)=\zeta(1)g\cdot\alpha=\zeta(1)\cdot 0=0. $$
Thus, $\phi ( \zeta\cdot (\alpha,\beta))=(g(0)^{-1}\zeta(1)^{-1}, Ad_{\zeta(1)}\beta(1)) $ which is equivalent to $(g(0)^{-1},\beta(1))$ in the space $G^{\Bbb C}\times_{\frak h^c} \frak m^c$. The mapping $\phi$ is well-defined and settles the diffeomorphism. Thus,  $TG^{\Bbb C}/H^{\Bbb C}$ has a complete \hk structure coming from that of $\Phi^{-1}(0)/\mathcal H$ and the restriction of the \hk metric $\kappa$ to $G/H$ is the original $Ad_G$-invariant
inner product $\langle\ ,\ \rangle$ in $\frak g$, {\it cf.} Proposition 6 in \cite{Da-Sw}. 

The \hk\m\ $(TG^{\Bbb C}/H^{\Bbb C},\kappa)$ may be viewed as a \hk structure on the tangent bundle of the \K\m\ $(G^{\Bbb C}/H^{\Bbb C},\kappa|_{G^{\Bbb C}/H^{\Bbb C}}).$

We consider the following $S^1$-action on $\mathcal A$ at \eqref{e5}, for $\theta\in [0, 2\pi)$
\begin{equation}\begin{aligned}\label{e41}
  e^{i\theta}\cdot (T_0,T_1,T_2,T_3):=(T_0,T_1,\cos\theta\ T_2-\sin\theta\ T_3,\sin\theta\ T_2+\cos\theta\ T_3).
\end{aligned}\end{equation}
It is direct to see this $S^1$-action has preserved both $\mu^{-1}(0)$ and  $ \Phi^{-1}(0)$; it also interchanges with any $\mathcal G$-action. Briefly, we'll show this action has preserved the complex structure $I$ and the \K form $\omega_I$ and hence has preserved the \hk metric $\kappa$ of $ \Phi^{-1}(0)/\mathcal H.$ Let $X$ denote the vector field  generated by this $S^1$-action.
\begin{equation}\begin{aligned}\label{e42} 
X (T)=\frac {d}{d\theta}|_{\theta=0}e^{i\theta}\cdot T=(0,0,-T_3, T_2)=-T_3 \frac {\pd}{\pd T_2}+T_2 \frac {\pd}{\pd T_3}.
\end{aligned}\end{equation}

\begin{lem}\label{L10}
The $S^1$-action has preserved the \hk metric $\kappa$; both $L_X\omega_I$  and $L_XI$ have vanished.
\end{lem}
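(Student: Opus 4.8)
The plan is to verify the claim by direct computation using the explicit description of the vector field $X$ generated by the $S^1$-action, given in \eqref{e42} as $X=-T_3\frac{\pd}{\pd T_2}+T_2\frac{\pd}{\pd T_3}$, together with the coordinate expressions for $I$ and $\omega_I$ already recorded in this section. Since the $S^1$-action acts by an orthogonal rotation in the $(T_2,T_3)$-plane and fixes the $(T_0,T_1)$ components, its preservation of the $L^2$-metric \eqref{e10} is essentially immediate: each summand $\langle T_j,T_j\rangle$ in the integrand is either untouched ($j=0,1$) or transformed by a rotation matrix, and rotations are isometries of the inner product $\langle\ ,\ \rangle$. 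Thus $\kappa$ is preserved, and by the general principle that preserving any two of the triple $(\kappa,\omega_I,I)$ forces preservation of the third, it suffices to establish the two stated vanishings $L_X\omega_I=0$ and $L_XI=0$.

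First I would compute $L_XI=0$. The complex structure $I$ is left multiplication by $i$, which acts as $(X_0,X_1,X_2,X_3)\mapsto(-X_1,X_0,-X_3,X_2)$ on tangent vectors; in particular it sends $\frac{\pd}{\pd T_2}\mapsto\frac{\pd}{\pd T_3}$ and $\frac{\pd}{\pd T_3}\mapsto-\frac{\pd}{\pd T_2}$. Because $I$ has constant coefficients in the affine coordinates $(T_0,T_1,T_2,T_3)$ on the vector space $\mathcal A$ and $X$ is a \emph{linear} vector field (its coefficients are linear in the $T_j$), the Lie derivative $L_XI$ reduces to the commutator of the linear endomorphism $I$ with the linear map defining the flow of $X$, namely the infinitesimal rotation $\frac{\pd}{\pd\theta}|_{\theta=0}$ of the $(T_2,T_3)$-block. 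I would check that the rotation generator and the constant matrix $I$ commute, which holds precisely because $I$ acts within the $(T_2,T_3)$-plane as the same $90^\circ$ rotation that the $S^1$-flow refines. Concretely, $L_XI=[I,\,\text{(flow generator)}]$ vanishes block by block.

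Next I would compute $L_X\omega_I=0$ using Cartan's formula $L_X\omega_I=d(\iota_X\omega_I)+\iota_X(d\omega_I)$. Since $\omega_I$ is a constant-coefficient symplectic form on the vector space $\mathcal A$, namely $\omega_I=dT_0\,dT_1+dT_2\,dT_3$ from \eqref{e11}, we have $d\omega_I=0$, so only the term $d(\iota_X\omega_I)$ survives. Contracting $X=-T_3\frac{\pd}{\pd T_2}+T_2\frac{\pd}{\pd T_3}$ with $\omega_I$ picks up only the $dT_2\,dT_3$ summand and yields a one-form whose exterior derivative I would show is zero; equivalently, $\iota_X\omega_I$ is exact, being the differential of the moment-map Hamiltonian $\tfrac12(T_2^2+T_3^2)$ associated to this circle rotation. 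Alternatively, and perhaps more cleanly, I would invoke the $\mathcal K$ potential $f$ of \eqref{e35}: since $f$ is manifestly invariant under the rotation of $(T_2,T_3)$ one has $Xf=0$, and combined with $L_XI=0$ this forces $L_X\omega_I=L_X(dId f)=d\,I\,d(Xf)=0$.

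The main obstacle is essentially bookkeeping rather than conceptual: one must be careful that the identities $d\omega_I=0$ and the constancy of $I$ are taken on the full infinite-dimensional vector space $\mathcal A$ before any quotient, so that these Lie-derivative computations are legitimate, and then observe that all the structures descend to $\Phi^{-1}(0)/\mathcal H$ because the $S^1$-action commutes with the gauge $\mathcal G$-action and preserves $\mu^{-1}(0)$ and $\Phi^{-1}(0)$, as already noted before the lemma. Once $L_XI=0$ and $L_X\omega_I=0$ are verified upstairs on $\mathcal A$ (or on $\mathcal M$), their descent to the quotient \hk\ manifold is automatic, and the invariance of $\kappa=\omega_I(\cdot,I\cdot)$ follows formally.
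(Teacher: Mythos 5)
Your proposal is correct and takes essentially the same route as the paper: Cartan's formula with the exact primitive $-d\bigl(\tfrac12(T_2^2+T_3^2)\bigr)=\iota_X\omega_I$ to get $L_X\omega_I=0$, the commutator formula $(L_XI)(Y)=[X,IY]-I[X,Y]$ evaluated on the coordinate fields $\frac{\pd}{\pd T_j}$ (your block-matrix commutator is the same computation) to get $L_XI=0$, and invariance of $\kappa$ deduced from these two. Your additional touches --- the direct isometry check of the $L^2$-metric, the potential-based alternative via $Xf=0$, and the explicit remark on descent to $\Phi^{-1}(0)/\mathcal H$ --- merely supplement what the paper does.
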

\begin{proof}
\begin{equation}\begin{aligned}\label{e43} 
L_X\omega_I=L_X(dT_2 dT_3)=d(i_X(dT_2 dT_3))=-d(T_2dT_2+T_3dT_3)=0.
\end{aligned}\end{equation}
Applying the formula
\begin{equation}\begin{aligned}\label{e44} 
(L_XI)(Y)=L_X(IY)-I(L_XY)=[X,IY]-I[X,Y]
\end{aligned}\end{equation}
to see $(L_XI)(\frac {\pd}{\pd T_j})=0,\forall j.$ Since both $I$ and $\omega_I$ are preserved by the $S^1$-action, the \hk metric is an isometry with respect to this action.
\end{proof}

The $S^1$-action by scalar multiplication on the fibers is an isometry, by the uniqueness  of Theorem A in \cite{Fe}, we conclude that near the zero section of $TG^{\Bbb C}/H^{\Bbb C}$, the \hk metric $\kappa$ has coincided with the one constructed over the tangent bundle of the \K\m\ $(G^{\Bbb C}/H^{\Bbb C}, \kappa|_{G^{\Bbb C}/H^{\Bbb C}})$ by Feix using the twistor method.

The next step is to have a close look at the  \K\m\ $(G^{\Bbb C}/H^{\Bbb C}, \kappa|_{G^{\Bbb C}/H^{\Bbb C}}).$
Analogous to the $TG^{\Bbb C}$ case, we work in the subspace $\{T_2=T_3=0\}$ to set a diffeomorphism 
\begin{equation}\begin{aligned}\label{e38}
\varphi: {\Phi}^{-1}(0)/ \mathcal H|_{T_2=T_3=0}&\to G\times_{H} \frak m\\
(T_0,T_1)&\to (\xi(0)^{-1},T_1(1))
\end{aligned}\end{equation}
where $\xi\in\mathcal G, \xi(1)=id,\ \xi\cdot T_0=0.$ This $\varphi$ is well-defined as the mapping in \eqref{e37}.
\begin{equation}\begin{aligned}\label{e109}
\varphi^{-1}: G\times_{H} \frak m&\to {\Phi}^{-1}(0)/ \mathcal H|_{T_2=T_3=0};\\ 
(a,v)&\to (-\frac {dh}{dt} h^{-1}, hvh^{-1})
\end{aligned}\end{equation}
for some $h\in\mathcal G$ with the boundary condition $h(0)=a, h(1)\in H.$

The following diffeomorphisms are well-defined.
\begin{equation}\begin{aligned}\label{e17}
 G\times_{H} \frak m\overset {\varphi^{-1}}\longrightarrow&({\Phi}^{-1}(0)/ \mathcal H|_{T_2=T_3=0},I)&&\overset \eta\longrightarrow {\Phi}_c^{-1}(0)/ \mathcal H^c|_{\beta=0}\overset \phi\longrightarrow G^{\Bbb C}\times_{H^{\Bbb C}} 0\\ (\xi(0)^{-1},T_1(1))\longrightarrow
&\;\;\;\;\;\;\;\;\; (T_0,T_1)&&\longrightarrow \alpha=T_0+iT_1\longrightarrow  g(0)^{-1}
\end{aligned}\end{equation}
where $\xi\in\mathcal G$ such that $\xi\cdot T_0=0$ and $\xi(1)=id; g\in \mathcal G^c$ such that $g\cdot \alpha=0$ and $g(1)=id$.
For any $\zeta\in \mathcal H^c$,
 $\zeta(1)g\zeta^{-1}$ is the identity at $t=1$ and  has gauged $\zeta\cdot\alpha$ to zero, $\phi ( \zeta\cdot\alpha)=\zeta(0)g^{-1}(0)\zeta^{-1}(1)= g^{-1}(0)\zeta^{-1}(1)$. The mapping $\phi$ is well-defined since $\zeta(1)\in H^{\Bbb C}$.

Following the same lines  in the $TG^{\Bbb C}$ case in Lemma \ref{L2}, we'll obtain a mapping similar to \eqref{e18}
\begin{equation}\begin{aligned}\label{e39}
 \phi\cdot\eta\cdot\varphi^{-1}: G\times_{H} \frak m\to G^{\Bbb C}/H^{\Bbb C};\ \ \ (a,v)\to a\exp iv H^{\Bbb C}.
\end{aligned}\end{equation}
By  Proposition \ref{L8},
$G^{\Bbb C}/H^{\Bbb C}$  is the adapted complexification on the tangent bundle of the \R\m\ $(G/H,\frak b)$ 
where $\frak b$ is the normal metric in $G/H$ induced from $\langle\ ,\ \rangle$. 

As computed at \eqref{e77}, the \K potential of the \K metric $(\phi\eta\varphi^{-1})^*\kappa|_{G^{\Bbb C}/H^{\Bbb C}}$
at the point $(a,v)\in G\times_{H} \frak m$ is $\frac {|v|^2}2$.
 Thus, $\kappa|_{G^{\Bbb C}/H^{\Bbb C}}$ is the canonical \K metric of the adapted complexification on $T(G/H, \frak b)$.

Let $G$ be a compact Lie group; $H<G$ be a closed subgroup; $g_n$ be the normal metric on $G/H$ induced from a bi-invariant metric $g$ in $G$. Let $\kappa$ be the \hk metric on $TG^{\Bbb C}/H^{\Bbb C}$ associated to $g$ obtained in this section.
We have the following theorem.

\begin{thm}\label{L9}

(I). The adapted complex structure has existed on the whole tangent bundle of $(G/H, g_n)$ with the canonical \K metric $h$. The resulting \K\m\ is denoted as $(T(G/H, g_n), h)$ which is biholomorphically isometric to the \K \m\ $(G^{\Bbb C}/H^{\Bbb C}, \kappa|_{G^c/H^c})$. 

(II).
Taking the tangent bundle  to the \Rm\ $(T(G/H, g_n), h)$, a \hk metric $\kappa$ is obtained in $T(T(G/H, g_n),h)=T(G^{\Bbb C}/H^{\Bbb C}, \kappa|_{G^{\Bbb C}/H^{\Bbb C}})$ with $\kappa|_{G^{\Bbb C}/H^{\Bbb C}}=h; h_{G/H}=g_n$. 
 All the three metrics $g_n, h$ and $\kappa$ are complete.
 
 (III). Though the \hk structure is defined on the whole $T(G^{\Bbb C}/H^{\Bbb C}, \kappa|_{G^{\Bbb C}/H^{\Bbb C}})$, the adapted complex structure can at most be partially defined on it.
 \end{thm}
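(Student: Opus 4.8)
The plan is to deduce the three parts of Theorem \ref{L9} as essentially a synthesis of the preceding results, with the genuinely new content concentrated in part (III). For part (I), I would simply invoke Proposition \ref{L8} to assert that the adapted complex structure lives on all of $T(G/H,g_n)$ with the canonical \K metric $h$, and then observe that the chain of diffeomorphisms culminating in \eqref{e39} is a biholomorphism $G\times_H\frak m\to G^{\Bbb C}/H^{\Bbb C}$ sending $(a,v)\to a\exp iv\,H^{\Bbb C}$. The identification of metrics follows from the potential computation just above the theorem statement: the pulled-back potential of $\kappa|_{G^{\Bbb C}/H^{\Bbb C}}$ equals $\frac{|v|^2}{2}$, which by \eqref{e88} is exactly the standard potential of the canonical \K metric. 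Hence $(T(G/H,g_n),h)$ is biholomorphically isometric to $(G^{\Bbb C}/H^{\Bbb C},\kappa|_{G^c/H^c})$.

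For part (II), I would assemble the completeness and boundary-restriction claims from the \hk quotient construction of this subsection. The diffeomorphism $\phi$ of \eqref{e37} identifies $\Phi_c^{-1}(0)/\mathcal H^c$ with $TG^{\Bbb C}/H^{\Bbb C}$, and the cited Proposition 6 of \cite{Da-Sw} gives that $\kappa$ descends to a complete \hk metric whose restriction to $G/H$ is the $Ad_G$-invariant inner product, i.e.\ $h_{G/H}=g_n$. The Feix uniqueness argument (via the $S^1$-isometry of Lemma \ref{L10} and Theorem A of \cite{Fe}) identifies $\kappa$ near the zero section with the canonical \hk metric over the \K base, so $\kappa|_{G^{\Bbb C}/H^{\Bbb C}}=h$. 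Completeness of all three metrics is then inherited down the tower $g_n,h,\kappa$.

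Part (III) is where the real argument sits, and it is a clean application of the curvature obstruction developed in \S4. The plan is to argue by the contrapositive of the necessary condition for global existence of an adapted complex structure: by Proposition \ref{I5} (itself resting on Proposition \ref{I4} and Theorem \ref{I3}), if $(G/H,g_n)$ is non-flat and carries a globally defined adapted complexification, then the canonical \K metric $h$ on $T(G/H,g_n)$ must acquire a \emph{negative} sectional curvature on some plane spanned by $\frac{\pd}{\pd x_i},\frac{\pd}{\pd y_j}$, namely $-\frac13 R_{ijij}<0$ wherever $R_{ijij}>0$. Since the adapted complex structure requires a non-negatively curved base (Theorem 4.3 of \cite{Le-Sz}, whose proof is purely local), such a negatively curved $(T(G/H,g_n),h)$ cannot carry a globally defined adapted complexification on its own tangent bundle. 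Because part (I) identifies that tangent bundle with $T(G^{\Bbb C}/H^{\Bbb C},\kappa|_{G^{\Bbb C}/H^{\Bbb C}})$, the adapted complex structure there can at most be partially defined.

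The one point requiring care, and the main obstacle, is verifying that $(G/H,g_n)$ is genuinely non-flat so that Proposition \ref{I5} applies nontrivially. A normal homogeneous metric on a compact $G/H$ has non-negative sectional curvature, and it is flat only in degenerate cases (e.g.\ a torus quotient); for the semisimple or positively curved situations of interest some $R_{ijij}(p)>0$. I would therefore either impose non-flatness as a standing hypothesis consistent with the examples of \S5.2, or point to the fact that for $H<G$ with $G$ compact and the normal metric nondegenerate, the O'Neill formula forces strictly positive curvature on suitable planes. Granting this, the negative sign $-\frac13 R_{ijij}$ from Proposition \ref{I4} immediately contradicts the non-negativity required for a global adapted structure, completing part (III).
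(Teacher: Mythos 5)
Your proposal follows essentially the same route as the paper: parts (I) and (II) are read off from the construction in \S 5.2 (the diffeomorphism \eqref{e39}, the potential computation giving $\frac{|v|^2}{2}$, Dancer--Swann and the Feix uniqueness via Lemma \ref{L10}), and part (III) is exactly the paper's one-line proof, namely an application of Proposition \ref{I5} after identifying $(G^{\Bbb C}/H^{\Bbb C},\kappa|_{G^{\Bbb C}/H^{\Bbb C}})$ with $(T(G/H,g_n),h)$. Your extra care about non-flatness of $(G/H,g_n)$ is warranted rather than a deviation: Proposition \ref{I5} does require a non-flat base, the paper invokes it without comment, and a flat example (e.g.\ a torus) would genuinely escape the conclusion, so flagging or hypothesizing non-flatness tightens the same argument.
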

\begin{proof}
Since $(G^{\Bbb C}/H^{\Bbb C}, \kappa|_{G^{\Bbb C}/H^{\Bbb C}})=(T(G/H, g_n), h)$ is the adapted complexification over the \R\m\ $(G/H, g_n)$, the statement (III) has followed from Proposition \ref{I5}.
\end{proof}

{\bf Remark.} Given a generalized flag \m\ $G^{\Bbb C}/P$,
using Nahm's equations with fast decay at the infinity, Biquard \cite{Biq}
has constructed a complete \hk metric $\kappa$ on  $T^*G^{\Bbb C}/P$.
After this work is completely done, we realized that  $\kappa|_{G^{\Bbb C}/P}$ is the given \K metric in viewing $G^{\Bbb C}/P$ as the orbit $Ad_G(\tau)$ with the Kirillov-Kostant symplectic form. Though this fact was not mentioned in \cite{Biq}, it turns out to be true, {\it cf.} p.7 in  \cite{Bie}.
Thus,  for those non-positively curved $G^{\Bbb C}/P$, there are plenty of them, the adapted complex structure is not defined on the whole tangent bundle whereas a  \hk structure does exist in $T^*G^{\Bbb C}/P$.

\vskip 0.2truein

 Institute of Mathematics, Academia Sinica

6F, Astronomy-Mathematics Building

No. 1, Sec. 4, Roosevelt Road

Taipei 10617, TAIWAN\\
{\it E-mail address: kan@math.sinica.edu.tw}

\end{document}